\newtheorem{theorem}{Theorem}
\newtheorem{corollary}[theorem]{Corollary}
\newtheorem{lemma}[theorem]{Lemma}
\newtheorem{proposition}[theorem]{Proposition}
\theoremstyle{definition}
\newtheorem{remark}[theorem]{Remark}
\newcommand{\R}{\mathbf{R}}
\newcommand{\N}{\mathbf{N}}
\newcommand{\hd}{\mathcal{H}}
\DeclareMathOperator{\dive}{div}
\renewcommand{\d}{{\mathrm d}}
\newcommand{\dist}{{\mathrm{dist}}}
\newcommand{\meas}{{\mathcal{M}}}
\renewcommand{\Re}{{\mathrm{Re}}}
\newcommand{\Pe}{{\mathrm{Pe}}}
\newcommand{\spt}{{\mathrm{spt}}}
\newlength{\dhatheight}
\title{A mathematical model for cell polarization in zebrafish primordial germ cells}
\author{Carolin Dirks \and Paul Striewski \and Benedikt Wirth \and Anne Aalto \and Adan Olguin-Olguin \and Erez Raz}
\date{}
\begin{document}
\maketitle

\begin{abstract}
Blebs are cell protrusions generated  by local membrane--cortex detachments followed by expansion of the plasma membrane. Blebs are formed by some migrating cells, for example primordial germ cells of the zebrafish. 
While blebs occur randomly at each part of the membrane in unpolarized cells, a polarization process guarantees the occurrence of blebs at a preferential site and thereby facilitates migration 
towards a specified direction. Little is known about the factors involved in development and maintenance of a polarized state, yet recent studies revealed the influence of an intracellular flow and the stabilizing 
role of the membrane-cortex linker molecule Ezrin. Based on this information, we develop and analyse a coupled bulk-surface model describing a potential cellular mechanism by which a bleb could be induced at a 
controlled site. The model rests upon intracellular Darcy flow and a diffusion-advection-reaction system, describing the temporal evolution from an unpolarized to a stable polarized Ezrin distribution. 
We prove the well-posedness of the mathematical model and show that simulations qualitatively correspond to experimental observations,
suggesting that indeed the interaction of an intracellular flow with membrane proteins can be the cause of the cell polarization.
\end{abstract}


\section{Introduction}

Several recent studies investigated the directional cell migration process via local membrane protrusions, so-called blebs. While the mechanisms of the actual 
bleb formation are quite well understood, the process of cell polarization leading to a stable \textit{directional} blebbing remains still unexplained. In 
some recent works (such as \cite[Paluch, Raz, 2013]{Ref:PaluchRaz2013}, \cite[Fritzsche, Thorogate et al., 2014]{Ref:FritzscheEtAl2014}), researchers suggested 
the role of the membrane-cortex linker Ezrin in inhibiting the probability for bleb formation in regions with a high Ezrin concentration.
In addition, a directed intracellular flow has been observed during cell polarization that seems to be related to the occurrence of so-called actin brushes, filamentous actin structures 
forming at the front side of the cell \cite[Kardash, Reichmann-Fried, 2010]{Ref:KardashEtAl2010}.
In this article, we take up these observations and hypothesize that shear stresses induced by the intracellular flow may lead to a local destabilization of the Ezrin linkages 
between membrane and cortex, resulting in a redistribution of membranous Ezrin and bleb formation.
This hypothesis is tested using a mathematical model for the time interval between actin brush formation and the onset of blebbing.
The model incorporates an intracellular flow driven by actin brushes and a description of the 
flow-controlled membranous Ezrin concentration including turnover rates from active (membrane-bound) and inactive (cytosolic) Ezrin. 
The experimentally observed Ezrin depletion in the front and accumulation in the back of the cell can be reproduced by the model.
Thereby our model positively answers the question whether there could be a mechanical basis for Ezrin polarization, in our case an actin-induced flow.


This work is organized as follows. We start with providing a brief overview of the biological context and the related work, and we introduce the notation used throughout this article. In 
\cref{sec:Model}, we describe our model used to simulate the temporal behaviour of the active Ezrin. The corresponding model analysis is presented in 
\cref{sec:Analysis}, where we prove well-posedness of the surface equation. Finally, we describe the numerical treatment of the coupled bulk-surface 
equation system and compare simulation results to experiments in \cref{sec:Experiments}.

\subsection{Biological setting} 

The process of directional cell migration is an important and extensively studied mechanism in early embryonic development. A widely used model for in 
vivo studies are primordial germ cells (PGCs). These cells are specified within the embryo and have to travel a certain distance to reach their 
destination, namely the site where the gonad develops \cite[Doitsidou, Reichmann-Fried et al., 2002]{Ref:DoitsidouEtAl2002}. This migration process is performed 
via blebs, local detachments of the cell membrane from the cortex which move the cell to a certain direction specified by a chemical gradient. Little 
is known about the signaling process within the cell in the time interval between the arrival of the chemical signal and the actual directed movement, in which 
the cell changes from an unpolarized to a polarized state. However, several factors have been shown to play a role in the polarization process \cite[Paluch, 
Raz, 2013]{Ref:PaluchRaz2013}. 

Blebbing is produced by an increase in the intracellular pressure coupled to detachment of the cell membrane from the cell cortex. 
While migrating, PGCs go through two different phases, named ``run'' and ``tumble''. 
During the ``tumble'' state, PGCs are apolar and blebs are formed at random sites around the cell perimeter. 
When the PGCs are in the ``run'' state the cells are polarized such that blebs form predominantly in one direction which is defined as the leading edge \cite[Kardash, Reichmann-Fried, 2010]{Ref:KardashEtAl2010}, \cite[Paksa, Raz, 2015]{Ref:PaksaRaz2015}.


Although the entire process of PGC polarization has not yet been fully understood, some factors have been identified to wield a strong influence. In 
the polarized state, a preferential polymerization of filamentous actin structures, so-called actin brushes, at the front edge of cell was reported, whereas 
such structures are absent in unpolarized cells \cite[Kardash, Reichmann-Fried, 2010]{Ref:KardashEtAl2010}. The actin brushes are considered to be responsible for a recruitment of myosin, that leads to an increase of the contractility, favouring the corresponding side of the cell as the leading edge \cite[Paluch, Raz, 2013]{Ref:PaluchRaz2013}.
The accumulation of actin brushes is furthermore assumed to be correlated with a flow of cytoplasm 
towards the expanding bleb on the one hand and a strong retrograde flow of cortical actin on the other \cite[Kardash, Reichmann-Fried, 
2010]{Ref:KardashEtAl2010}, \cite[Reig, Pulgar et al., 2014]{Ref:ReigEtAl2014}. Moreover, a frequently reported feature of polarized blebbing cells is a local 
decrease of the membrane-cortex attachment at the front edge in combination with an increase at the back. Hence, a negative correlation between the propensity 
for blebbing and the stability of membrane-cortex attachment is assumed. A presumable candidate for regulating the membrane-cortex attachment is the linker 
molecule Erzin \cite[Paluch, Raz, 2013]{Ref:PaluchRaz2013}. Experiments have shown that in polarized cells, Ezrin accumulates at the back \cite[Lorentzen, Bamber et al.,2011]{Lorentzen1256}. 
Besides, the linker molecule is able to switch between an active and an inactive state. During its active form, 
it links the cell cortex to the membrane via two binding terminals (the membrane-binding N-terminal and the actin-binding C-terminal), whereas in its inactive 
form, these terminals interact with each other causing the molecule to diffuse within the cytoplasm. 
Ezrin constantly keeps turning from one state to the other, resulting in a frequent change between binding to and detaching from the membrane \cite[Fritzsche, 
Thorogate et al., 2014]{Ref:FritzscheEtAl2014}, \cite[Br\"uckner, Pietuch et al., 2015]{Ref:BruecknerEtAl2015}. 

To get a better understanding of the intracellular events involved in the polarization processes, we develop a mathematical model expressing 
the interaction of different factors which are known to play a role in the emergence and maintenance of a polarized state. The model focuses on the role and 
regulation of the active and inactive Ezrin concentration, including the influence of the cytoplasmic flow driven by localised actin-myosin contraction. In 
particular, we present a potential model for the binding and unbinding dynamics along the cell membrane by incorporating the reported information together with 
reviewing some additional hypotheses.

\subsection{Related work}
A variety of models for cell polarization have been proposed, many of them based on reaction-diffusion equations, suggesting that diffusive instabilities are 
involved in the process of cell polarization \cite[Levine et al. 2006]{Ref:Levine}, \cite[Onsum, Rao, 2007]{Ref:OnsumRao}, \cite[R\"atz, R\"oger, 2012]{Ref:RaetzRoeger2012}, \cite[R\"atz, R\"oger, 2014]{Ref:RaetzRoeger2014}.
Cell polarization induced by active transport of polarization markers was for example studied by \cite[Hawkins, Bénichou et al., 2009]{Ref:Hawkins}, 
\cite[Calvez, Hwakins et al., 2012]{Ref:Calvez}. In either article, the presented models account for active transport of polarization markers along the cytoskeleton. 
\cite[Hausberg, R\"oger, 2018]{hausberg2018well} described the activity of GTPases by a system of three coupled bulk-surface advection-reaction-diffusion equations. 
The system models the interconversion of active and inactive GTPase, lateral drift and diffusion of molecules along the membrane and also the diffusion of inactive molecules into the cytosol. In contrast to our approach, 
Hausberg and R\"oger suggest flow-independent reaction terms and assume the geometry of the cell to be more regular than we do.

\cite[Garcke, Kampmann et al., 2015]{Ref:GarckeKampmann2015} proposed a model for lipid raft formation in cellular membranes and their interaction with 
intracellular cholesterol. Although not directly linked to cell polarization, their model comprises phase separation and interaction energies, which are similar 
to those presented in this article. In their work, a diffusion equation is to account for the intracellular diffusion of cholesterol, whereas a Cahn--Hilliard 
equation coupled with a reaction-diffusion-type equation models the formation of rafts and cholesterol binding and unbinding dynamics on the membrane.

In a recent work, \cite[Burger, Pietschmann et al., 2019]{Ref:BurgerPietschmann2019} presented a model for bleb formation based on a combination of intracellular flow and interactions of linker molecules. 
Their model includes a variable domain, where a bleb is initiated by a certain threshold distance between membrane and cortex. The flow inside and outside of the cell domain is modelled via Stokes equations, 
interactions of active and inactive linker molecules are described by a reaction-diffusion system. 
Although the main model ingredients are similar, the model details (such as the type of flow or our resulting phase field equation) as well as the mathematical analysis differ,
where in both cases we aim to impose a minimum amount of extra assumptions.
Additionally, the numerical simulations in \cite[Burger, Pietschmann et al., 2019]{Ref:BurgerPietschmann2019} target the membrane-cortex disruption,
while the aim of our approach is to investigate the effect of different model parameters on the global distribution of the linker molecules
and to compare the results directly to biological experiments. 

Outside of the scope of this work are models that describe the process of the actual bleb formation, as presented for example in \cite[Young, Mitran, 
2010]{Ref:YoungMitran} or \cite[Strychalsky, Guy, 2012]{Ref:StrychalskiGuy}.

\subsection{Notation}

Throughout the article, we assume that $\Omega$ is an open and convex bounded domain in $\R^n$ with boundary $\Gamma = \partial \Omega$, modelling a cell and its 
membrane. For a function $g$ defined on $(0,T)\times\Omega$, we denote the temporal derivative and the gradient with respect to the spatial variable by 
$\partial_t g$ and $\nabla g$ respectively. The symbols $\nabla_{\Gamma}$, $\Delta_{\Gamma}$ and $\mathrm{div}_{\Gamma}$ are used to denote the surface 
gradient, Laplace-Beltrami operator and surface divergence on $\Gamma$. Additionally, we make use of the following conventions (the units refer to a three-dimensional cell):  

\begin{center}
\begin{tabular}{ll}
$w:(0,T)\times\Omega\rightarrow \R^n$ & intracellular flow [$\mu$m/s] \\
$p:(0,T)\times\Omega\rightarrow \R$ & intracellular pressure [Pa] \\
$f:(0,T)\times\Omega\rightarrow \R^n$ & flow-inducing force per unit volume [N/m$^3$] \\
$u:(0,T)\times\Gamma\rightarrow [0,1]$ & density of active Ezrin [mol/$\mu$m$^2$] \\
$v:(0,T)\times\Omega\rightarrow [0,1]$ & density of inactive Ezrin [mol/$\mu$m$^3$] \\
$\kappa:\Omega\rightarrow\R$ & local permeability of cytoplasmatic matrix [m$^2$] \\
$\rho$ & density of cytoplasm [g/cm$^3$] \\
$\lambda$ & kinematic viscosity of cytoplasm [cm$^2$/s] \\
$\nu$ & diffusion coefficient in cytoplasm [$\mu$m$^2$/s]
\end{tabular}
\end{center}


\section{Model derivation} \label{sec:Model}

In this chapter, we present and describe a coupled bulk-surface model for the contribution of the linker-molecule Ezrin to the polarized state of a blebbing 
cell. The model covers the time interval in between the formation of the actin brushes, creating an intracellular flow, and the point where the cell arrives in a stable polarized state. The key points of the model can be summarized as follows: We assume a circular intracellular 
flow driven by actin-myosin motors, which creates a shear stress along the cell membrane. This stress destabilizes membrane-bound Ezrin, which as 
a consequence undergoes a retrograde flow to the back side of the cell. In addition, a nonlinear self-enhancing effect is modelled by a stronger affinity of inactive Ezrin to bind in regions with a high 
active Ezrin concentration and vice versa. The system variables of our model are 

\begin{center}
\begin{tabular}{cl}
$w$ & intracellular flow \\
$p$ & intracellular pressure \\
$u$ & density of active (membrane-bound) Ezrin \\
$v$ & density of inactive (unbound) Ezrin.
\end{tabular} 
\end{center}

\subsection{Model description}

The model presented in this section aims at incorporating all biological information assumed to play a role in bleb formation, while trying to be as simple as 
possible on the other side. In detail it is based on the following observations:

\begin{enumerate}
	
\item Very recent experiments \cite[Olguin-Olguin, Aalto et al., 2019]{Ref:OlguinEtAl2019} suggest that the membrane-cortex linker Ezrin becomes localised to the cell back where it functions in inhibiting bleb formation.

        
\item Experiments observed high cytosolic diffusion rates of inactive Ezrin in the range of 30\,$\mu$m$^2$/s \cite[Coscoy et al.]{Ref:Coscoy}. We show (see 
\cref{sec:Nondimensionalisation}) that as a consequence, on the time scales of interest cytosolic concentrations of inactive Ezrin are therefore likely to be spatially constant throughout the cell. 
On the other hand, \cite[Fritzsche et al.]{Ref:FritzscheEtAl2014} reported a slow diffusion of membranous Ezrin of around 0.003\,$\mu$m$^2$/s, caused by differences between the 
binding characteristics of its N- and C-terminal.

\item Experiments suggest the existence of an intracellular flow \cite[Goudarzi, Tarbashevich et al., 2017]{goudarzi2017bleb}, presumably mediated by myosin 
motors in the actin brushes, resulting in a fountain-like motion pattern of cytoplasmic particles (also modelled in \cite[Strychalsky, Guy, 2012]{Ref:StrychalskiGuy}, for instance).
We hypothesize that beyond mere 
passive transport of Ezrin molecules, hydrodynamic shear stresses might lead to alteration of the binding and unbinding dynamics of membrane-cortex linkers, 
especially to an increased dissociation in regions of high stress.

\item As different studies revealed \cite[Berryman et al]{Ref:Berryman}, \cite[Gautreau et al.]{Ref:Gautreau}, inactive Ezrin can form oligomers in the 
cytoplasm and at the plasma membrane, and the article \cite[Herrig et al.]{Ref:Herrig} suggests that Ezrin binds cooperatively to membrane regions with a high 
concentration of certain types of phospholipids (PIP(2)).
Thus, the binding of Ezrin between cortex and membrane might be reinforced by higher active Ezrin concentrations.
On the other hand, a lack of membrane-cytoskeleton linkers could result in delamination of the membrane from 
the cortex, thereby preventing the binding of Ezrin to the cortex-membrane complex so that low active Ezrin concentrations may have an inhibitory effect.
Finally, a certain maximum density of linker molecules between cortex and membrane cannot be exceeded.
Altogether this suggests a nonlinear influence of the active Ezrin concentration on the Ezrin binding dynamics.
\end{enumerate}

A generic primordial germ cell (PGC) is expressed by a time-independent domain $\Omega \subset \R^n$ with a smooth boundary $\Gamma = \partial 
\Omega$ (which is temporally static throughout our work since we are only interested in the cell behaviour \emph{before} bleb formation). To simulate a realistic environment, one would choose $n=3$, while for the sake of simplicity, 
we set $n=2$ in our simulations, since no qualitative difference is expected in the behaviour between a two- or a three-dimensional cell. The 
density of active Ezrin (in mol/$\mu$m$^2$) is then described by a function $u:(0,T)\times\Gamma \to \R$, the density of inactive Ezrin (in mol/$\mu$m$^3$) as 
$v:(0,T)\times\Omega\rightarrow\R$.

As the cytoplasm can be described as a poroelastic material (see e.\,g.\ \cite[Moeendarbary et al.]{Ref:Moeendarbary}), cytoskeleton and organelles behaving like 
elastic solids and the cytosol like a fluid, the cytoplasmic flow $w$ is described by the incompressible Brinkman--Navier--Stokes equation 
\begin{align}
\rho\partial_t w + \rho(w\cdot\nabla)w + \frac{\rho\lambda}{\kappa}w &= -\nabla p + \rho\lambda\Delta w + f &&\text{ in } \Omega, \label{Eq:BNSEquation} \\
\mathrm{div}\, w &= 0\, ,&&\text{ in } \Omega\label{Eq:incompressibility}
\end{align}
with no-outflow boundary conditions. Here, $\kappa$ denotes the (spatially varying) permeability, $p$ 
denotes the intracellular pressure and $f$ a body force induced by actin-myosin contraction in the actin brushes. The parameter $\lambda$ represents the kinematic viscosity and 
$\rho$ the density of the cytoplasm.

The density $u$ of active Ezrin on $\Gamma$ is obtained as a solution of the reaction-advection-diffusion equation
\begin{align}
 \partial_t u &= -\underbrace{\mathrm{div}_{\Gamma}(u\, w)}_{\mathrm{advection}} + \underbrace{\vphantom{()}\nu\, \Delta_{\Gamma} u}_{\mathrm{diffusion}} - 
\underbrace{D(w,u,v)}_{\mathrm{desorption}} + \underbrace{A(w,u,v)}_{\mathrm{adsorption}} & \text{in }\ \Gamma\, . \label{Eq:SurfaceEquation}
\end{align}
The two functions $A,D:\R^n\times\R\times\R \to \R^{\geq 0}$ describe the adsorption and desorption kinetics of membranous Ezrin, which have been studied for instance in 
\cite[Bosk et al.]{Ref:Bosk}, but whose exact form is not known. In order to derive reasonable rate descriptions, we model the turnover between the two concentrations via 
classical reaction kinetics. To this end, denote by $u_{\max}$ the theoretical concentration of Ezrin binding sites on the cell membrane-cortex complex and by $b=u_{\max}-u$ the concentration of free sites.
Due to potential local membrane-cortex detachments, it might happen that some binding sites are actually not available for inactive Ezrin since the distance between 
membrane and cortex is too large. Hence, we denote by $\tilde{b}\leq b$ the binding site concentration where new Ezrin is actually allowed to bind. Since a locally 
higher active Ezrin concentration is associated with a smaller distance between membrane and cortex, there is a (at least to first order) linear relation between $b$ and $\tilde{b}$ via
\begin{equation*}
\tilde{b} = (\tfrac{u}{u_{\max}})^\alpha b
\end{equation*}
for some exponent $\alpha>0$, which for lack of experimental data we will choose as $\alpha=1$ (as in the case of membrane-binding experiments for the small Rho GTPase Cdc42 in \cite[Goryachev, 
Pokhilko 2008]{Ref:GoryachevPokhilko2008}; the choice does not affect the qualitative model behaviour).
The binding rates follow the classical reaction kinetics
\begin{center}
\schemestart 
inactive Ezrin $v$ + available binding-sites $\tilde{b}$ \arrow{<=>[$k_1$][$k_2(w)$]}active Ezrin $u$,
\schemestop
\end{center}
where for simplicity we assume a constant adsorption rate $k_1$ and a flow-dependent desorption rate $k_2(w)$. By the law of mass action this implies 
\begin{equation*}
\frac{du}{dt} = k_1 v \tilde{b} - k_2(w)u = k_1(\tfrac{u}{u_{\max}})^\alpha bv - k_2(w)u = k_1(\tfrac{u}{u_{\max}})^\alpha(u_{\max}-u)v - k_2(w)u\,, 
\end{equation*}
where in the last step we used $b = u_{\max}-u$.
Thus, with $\gamma=k_1/u_{\max}^\alpha$ we set
\begin{equation*}
A(w,u,v) = a(u,v) = \begin{cases} \gamma u^\alpha(u_{\max}-u)v & \text{ if } u\leq u_{\max}, \\ 0 & \text{ otherwise.} \end{cases}
\end{equation*}
The desorption rate $k_2(w)$ on the other hand is taken to depend affinely (which can be thought of as the first order expansion) on the tangential cytoplasm flow velocity along the membrane,
modelling a shear-stress induced Ezrin destabilization.
For Ezrin concentrations above the saturation concentration $u_{\max}$, which could in principle occur within the membrane due to passive transport by the flow,
we assume a strong nonlinear desorption rate increase with exponent $\zeta>1$ (the exact form is not expected to have any qualitative effect).
In summary, we set
\begin{align*}
D(w,u,v) = d(w,u) = (\beta_1|w|+\beta_2)\begin{cases}  u & \text{ if } u\leq u_{\max}, \\ \frac{u^\zeta+(\zeta-1)u_{\max}^\zeta}{\zeta u_{\max}^{\zeta-1}}& \text{otherwise,} \end{cases}
\end{align*}
where the parameter $\beta_1> 0$ controls the influence of hydrodynamical contributions in Ezrin dissociation. Thus, $D$ increases with a stronger flow as well as with an increasing concentration of active Ezrin.

The cytosol concentration $v:\Omega\to\R$ of inactive Ezrin is governed by an advection-diffusion equation,
\begin{equation}\label{Eq:pdeV}
\partial_t v = -\mathrm{div}(vw) + \mu\Delta v \qquad \text{ in } \Omega,
\end{equation}
where Ezrin production or decay is negligible on the time-scale considered.
The boundary conditions are dictated by the turnover rates between active and inactive Ezrin,
\begin{equation}\label{Eq:BCV}
\mu\nabla v\cdot n = D(w,u,v)-A(w,u,v)= d(w,u) - a(u,v) \qquad \text{ on } \Gamma.
\end{equation}

Summarizing, the full system of equations for the variables $w,p,u,v$ with their initial and boundary conditions reads 
\begin{align*}
\rho\partial_t w&= - \rho(w\cdot\nabla)w - \frac{\rho\lambda}{\kappa}w -\nabla p + \rho\lambda\Delta w + f &&\text{ on } (0,T)\times\Omega, \\
0&=\mathrm{div}\, w &&\text{ on } (0,T)\times\Omega, \\
\partial_t u &= -\mathrm{div}_{\Gamma}(u\, w) + \nu\, \Delta_{\Gamma} u - d(w,u) + a(u,v) &&\text{ on } (0,T)\times\Gamma, \\
\partial_t v &= -\mathrm{div}(vw) + \mu\Delta v &&\text{ on } (0,T)\times\Omega, \\
w(0,\cdot) &= w_0 &&\text{ on } \Omega, \\
u(0,\cdot) &= u_0 &&\text{ on } \Gamma, \\
v(0,\cdot) &= v_0 &&\text{ on } \Omega, \\
w\cdot n &= 0 &&\text{ on } (0,T)\times\Gamma, \\
\mu\nabla v\cdot n &= d(w,u) - a(u,v) &&\text{ on } (0,T)\times\Gamma.
\end{align*}
Note that the (third) equation for $u$ can be seen as an Allen--Cahn equation with nonlinearity $W_{w,v}'(u)=d(w,u)-a(u,v)$, or equivalently with potential 
\begin{equation*}
W_{w,v}(u) = \begin{cases} \frac{\beta_1|w|+\beta_2}{2}(u^2-u_{\max}^2) -\gamma v\left(\frac{u_{\max}u^{\alpha+1}}{\alpha+1}-\frac{u^{\alpha+2}}{\alpha+2}-\frac{u_{\max}^{\alpha+2}}{(\alpha+1)(\alpha+2)}\right) & \text{ if } u\leq u_{\max},\\
\left(\beta_1|w|+\beta_2\right)\left(\frac{u^{\zeta+1}}{\zeta(\zeta+1)u_{\max}^{\zeta-1}} + \frac{(\zeta-1)u_{\max}}{\zeta}u-\frac{\zeta u_{\max}^2}{\zeta+1}\right) & \text{ otherwise,} \end{cases}
\end{equation*}
and additional advection term.
Illustrative sketches of the form and behaviour of $W_{w,v}$ are provided in \cref{fig:doubleWell1,fig:doubleWell2}.
Allen--Cahn type equations have seen frequent use in the description of phase separations in binary mixtures, see e.\,g.\ \cite[Cahn and Novick-Cohen]{Ref:Cahn}, and we expect the above model to also exhibit a phase separation into a phase rich in active Ezrin in the rear and a phase poor ofactive Ezrin in the front.

\begin{figure}
	\includegraphics[width=\linewidth]{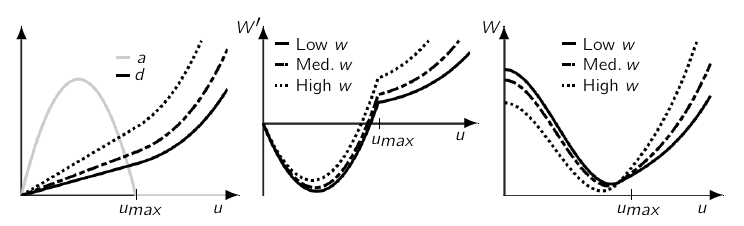}
	\caption{Sketches of the adsorption and desorption rate $a$ and $d$ (left), the corresponding turnover rate $W'$ and the potential $W$ as a function of membranous Ezrin concentration $u$ for different intensities of the velocity $w$ and $\alpha=1$. The potentials have the form of single wells.}
	\label{fig:doubleWell1} 
\end{figure} 

\begin{figure}
	\includegraphics[width=\linewidth]{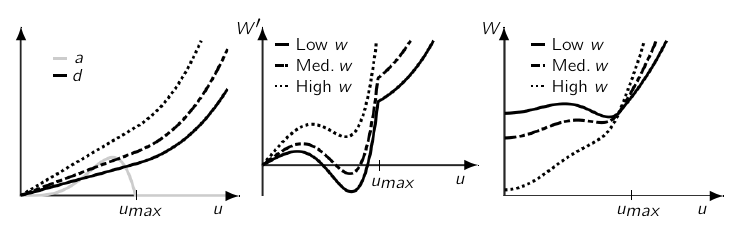}
	\caption{Sketches of the adsorption and desorption rate $a$ and $d$ (left), the corresponding turnover rate $W'$ and the potential $W$ as a function of membranous Ezrin concentration $u$ for different intensities of the velocity $w$ and $\alpha>1$. The potentials have the form of double wells.}
	\label{fig:doubleWell2} 
\end{figure}

\subsection{Nondimensionalisation and model reduction} \label{sec:Nondimensionalisation}

To reduce the number of parameters and to identify the predominant model mechanisms we transform the model equations into a dimensionless form using the following parameter values.
\begin{center}
\begin{tabular}{clcl}
$L$ & cell diameter & 10-20\,$\mu$m & \cite[Braat et al. 1999]{Ref:BraatElAl1999} \\
$L_N$ & diameter of nucleus & 6-10\,$\mu$m & \cite[Braat et al. 1999]{Ref:BraatElAl1999} \\
$T$ & duration of polarization process & 150\,s & own experimental data \\
$\rho$ & density of cytoplasm & 1.03-1.1\,g/cm$^3$ & \cite[Barsanti, Gualtieri 2005]{Ref:BarsantiGualtieri2005} \\
$\rho\lambda$ & dynamic viscosity of cytoplasm & 10$^{-2}$-10$^{-1}$\,Pa\,s & \cite[Mogilner, Manhart 2018]{Ref:MogilnerManhart2018} \\
$c_w$ & typical intracellular velocity & 0.1\,$\mu$m/s & \cite[Mogilner, Manhart 2018]{Ref:MogilnerManhart2018} \\
$\tilde{\kappa}=\frac{\kappa}{\rho\lambda}$ & hydraulic permeability & 0.1\,$\mu$m$^4$/(pN\,s) & \cite[Mogilner, Manhart 2018]{Ref:MogilnerManhart2018} \\
$\nu$ & diffusion rate of active Ezrin & 0.003\,$\mu$m$^2$/s & \cite[Fritzsche et al. 2014]{Ref:FritzscheEtAl2014} \\
$\mu$ & diffusion rate of inactive Ezrin & 30\,$\mu$m$^2$/s & \cite[Coscoy et al. 2002]{Ref:Coscoy} \\
$u_{\max}$ & saturation density of active Ezrin & unknown &  \\
$c_v$ & typical density of inactive Ezrin & unknown &  \\
\end{tabular} 
\end{center}

Subsequently we will indicate dimensionless variables by a hat.
Following the standard procedure, we choose
\begin{equation*}
x = L\hat{x}, \ w = c_w\hat{w}, \ t = \frac{L}{c_w}\hat{t}, \ p = \frac{c_w\rho\lambda L}{\kappa}\hat{p}, \ f = \frac{\rho\lambda c_w}{\kappa}\hat{f}
\end{equation*}
in the incompressible Brinkman--Navier--Stokes equations \eqref{Eq:BNSEquation}-\eqref{Eq:incompressibility} and arrive at
\begin{align*}
\frac{\kappa}{L^2}\Re \Big( \partial_{\hat{t}}\hat{w} + (\hat{w}\cdot\nabla)\hat{w} \Big) + \hat{w} 
&= -\nabla\hat{p} + \frac\kappa{L^2} \Delta\hat{w} + \hat f,\\
\dive\hat w&=0.
\end{align*}
Using using the above-listed parameter values, the Reynolds number is $\Re=\frac{c_wL}{\lambda}\sim1\cdot10^{-8}$-$2\cdot10^{-7}$ and $\kappa/L^2\sim0.0025$-$0.1$.
Thus, keeping only the terms with nonnegligible coefficients, the equations of fluid motion simplify to the Darcy flow equations
\begin{align*}
\hat{w} &= -\nabla\hat{p} + \hat{f}, \\
\dive \, \hat{w} &= 0.
\end{align*}
Note that the observed blebbing time scale $T$ roughly coincides with the time that the observed flow needs to traverse the length of the cell so that the dimensionless final time scales like $\hat T=\frac{Tc_w}L\sim$0.75-1.5.
Further note that even though the bulk force $f$ generated in the actin brushes is unknown, an upper bound can be obtained by multiplying reasonable myosin concentrations (e.\,g.\ $\sim0.2\,\mu$mol/l in plant endoplasm \cite{Ref:Yamamoto2006})
with the force generated per myosin head ($\sim40$\,pN, see e.\,g.\ \cite{Ref:Lohner2018}),
resulting in values of $f\sim5\cdot10^9$ or $\hat f\sim5\cdot10^5$-$5\cdot10^7$ if all myosin molecules were simultaneously active.
Even if only a fraction of the myosin motors is active at any time, the generated force will thus still be able to drive an intracellular flow.

Choosing the same temporal and spatial scales and additionally
\begin{equation*}
u = u_{\max}\hat{u}, \ v = c_v\hat{v},
\end{equation*}
the equation for the active Ezrin concentration turns into
\begin{equation*}
\partial_{\hat{t}}\hat{u} = -\dive_\Gamma(\hat{u}\hat{w}) + \varepsilon\Delta_\Gamma\hat{u} - \frac{L}{c_wu_{\max}}d(c_w\hat{w},u_{\max}\hat{u}) + 
\frac{L}{c_wu_{\max}}a(u_{\max}\hat{u},c_v\hat{v})
\end{equation*}
with $\varepsilon=\frac{\nu}{Lc_w}\sim$0.0015-0.003.
Setting
\begin{align}
\hat{d}(\hat{w},\hat{u}) &= (C_1|\hat w|+C_2)\cdot\begin{cases} \hat{u}  &\text{ if } \hat{u}\leq 1, \\
\frac{\hat u^\zeta+\zeta-1}{\zeta} & \text{ otherwise,} \end{cases} \label{eqn:desorption}\\
\hat{a}(\hat{u},\hat{v}) &= \begin{cases} C_3\hat{u}^\alpha(1-\hat{u})\hat{v} & \text{ if } \hat{u}\leq 1, \\ 0 & \text{ otherwise,} \end{cases}\label{eqn:adsorption}
\end{align}
for the dimensionless parameters
\begin{equation*}
C_1 = L\beta_1, \ C_2 = \frac{L}{c_w}\beta_2, \ C_3 = \frac{Lu_{\max}^\alpha c_v}{c_w}\gamma,
\end{equation*}
the equation finally reduces to 
\begin{align*}
\partial_{\hat{t}}\hat{u} &= -\dive_\Gamma(\hat{u}\hat{w}) + \varepsilon\Delta_\Gamma\hat{u} - \hat{d}(\hat{w},\hat{u}) + \hat{a}(\hat{u},\hat{v}).
\end{align*}
We will keep the $\varepsilon$-weighted diffusion term as a regularization; from the theory of Allen--Cahn type equations it is known that it governs the width of the diffusive interface between phases of different Ezrin concentration.
Due to a lack of experimental information about typical concentrations $u_{\max}$ and $c_v$ of active and inactive Ezrin as well as the exact form of the adsorption and desorption rate, the dimensionless model equations still involve some unknown parameters. We will present some possible choices and discuss the influence of different parameters within \cref{sec:ExperimentalResults}.


To reduce the equations for the inactive Ezrin concentration we will make the assumption that
\begin{equation*}
c_vL\gg u_{\max}
\qquad\text{for the typical inactive Ezrin concentration }
c_v=\frac1{|\Omega|}\int_\Omega v_0\,\d x,
\end{equation*}
where $|\Omega|$ denotes the Lebesgue measure of $\Omega$.
Unfortunately, in the literature we were not able to find any measurements of Ezrin concentrations in cells in order to support this assumption.
Nevertheless we believe the assumption to be reasonable and to represent the typical situation for most membrane-active proteins in a cell,
since assuming $c_vL\sim u_{\max}$ would mean that the typical number of protein molecules in the cytosol equals the typical number in the membrane,
meaning a (highly unlikely) perfect recruitment to the membrane.
We now introduce the spatial concentration average and residual
\begin{equation*}
\bar{v} = \frac{1}{|\Omega|}\int_\Omega v \, \d x, \quad r = v-\bar{v}.
\end{equation*}
Using \eqref{Eq:pdeV}-\eqref{Eq:BCV}, the functions $\bar v$ and $r$ are governed by the differential equations
\begin{align*}
\frac{\d\bar{v}}{\d t} &= \frac{1}{|\Omega|} \int_\Omega \mu\Delta v- \dive(vw) \, \d x 
= \frac{1}{|\Omega|} \int_\Gamma D-A \, d\mathcal{H}^2, &&\text{on }(0,T)\times\Omega,\\
\partial_t r &= \mu\Delta r - \dive(rw) - \frac{1}{|\Omega|}\int_\Gamma D-A \, d\mathcal{H}^2 &&\text{on }(0,T)\times\Omega, \\
\mu\nabla r\cdot n &= D-A &&\text{on }(0,T)\times\Gamma, \\
r(0,\cdot) &= r_0 &&\text{on }\Omega, \\
\bar v(0)&=\bar v_0,\\
\int_\Omega r \, \d x &= 0
\end{align*}
for $\bar v_0=\int_\Omega v_0\,\d x/|\Omega|$ and $r_0=v_0-\bar v_0$,
where we used integration by parts and $w\cdot n=0$ on $\Gamma$, and where $\hd^2$ denotes the surface (two-dimensional Hausdorff) measure.
Introducing
\begin{equation*}
r = c_v\hat{r},\
\bar v=c_v\hat v
\end{equation*}
as well as the P\'eclet number $\Pe=\frac{c_wL}{\mu}\sim$0.03-0.07$\ll1$, the differential equation for $\hat r$ becomes
\begin{align*}
\Pe\,\partial_{\hat{t}}\hat{r} &= \Delta\hat{r} - \Pe\,\dive(\hat{w}\hat{r}) - \frac{1}{|\hat{\Omega}|}\frac{L^2}{\mu}\frac{u_{\max}}{Lc_v}\int_{\hat{\Gamma}} \frac{D-A}{u_{\max}} \, d\mathcal{H}^2 &&\text{on }(0,T)\times\Omega, \\
\nabla\hat r\cdot n &= \frac{L^2}{\mu}\frac{u_{\max}}{Lc_v}\frac{D-A}{u_{\max}} &&\text{on }(0,T)\times\Gamma.
\end{align*}
Our biological experiments suggest that it takes roughly around 10\,s until a substantial Ezrin concentration establishes on an originally Ezrin-depleted membrane so that the Ezrin turnover rate can be estimated to satisfy
\begin{equation*}
\frac{|D-A|}{u_{\max}}\leq\frac1{10\,\text s}\,.
\end{equation*}
Together with $c_vL\gg u_{\max}$ we thus obtain $\frac{L^2}{\mu}\frac{u_{\max}}{Lc_v}\frac{|D-A|}{u_{\max}}\sim\frac{u_{\max}}{Lc_v}\ll1$ so that, by neglecting terms with small coefficients, the equations for $\hat r$ turns into 
\begin{equation*}
\Delta\hat r = 0 \text{ on }(0,T)\times\Omega, \ \nabla\hat r\cdot n =0 \text{ on }(0,T)\times\Gamma, \ \int_\Omega\hat r \, \d x = 0,
\end{equation*}
and thus $\hat r=0$. Therefore, we may assume that $v$ remains constant in space, $v=\bar{v}$.
Now it is obvious that the total Ezrin molecule number within the cell stays constant over time.
Indeed, we have
\begin{equation*}
\frac\d{\d t}\left(|\Omega|\bar v+\int_\Gamma u\,\d\hd^2\right)=\int_\Gamma D-A\,\d\hd^2+\int_\Gamma\partial_tu\,\d\hd^2=\int_\Gamma\nu\Delta_\Gamma u-\dive_\Gamma(uw)\,\d\hd^2=0
\end{equation*}
using integration by parts.
Furthermore, if $u_0$ is nonnegative, $u$ will be so for all times so that $|\Omega|\bar v\leq\int_\Omega v_0\,\d x+\int_\Gamma u_0\,\d\hd^2=|\Omega|c_v+\int_\Gamma u_0\,\d\hd^2$.
Likewise, $|\Omega|\bar v\geq|\Omega|c_v-\max_{t\in(0,T)}\int_\Gamma u\,\d\hd^2$, where we can estimate $\int_\Gamma u\,\d\hd^2\lesssim\hd^2(\Gamma)u_{\max}$ due to the strong desorption of Ezrin for $u>u_{\max}$.
In summary,
\begin{equation*}
\left|\frac{\bar v}{c_v}-1\right|=\frac1{|\Omega|c_v}||\Omega|\bar v-|\Omega|c_v|\lesssim\frac{\hd^2(\Gamma)u_{\max}}{|\Omega|c_v}\sim\frac{u_{\max}}{c_vL}\ll1
\end{equation*}
so that we may well approximate
\begin{equation*}
\bar v=c_v\,.
\end{equation*}


In summary, the reduced dimensionless system of equations read
\begin{align}
\hat w &= -\nabla\hat p +\hat f&&\text{ on } (0,\hat T)\times\hat\Omega,\label{eqn:Darcy1}\\
\dive\hat w &=0&&\text{ on } (0,\hat T)\times\hat\Omega,\label{eqn:Darcy2}\\
\hat w\cdot n &= 0 &&\text{ on } (0,\hat T)\times\hat\Gamma,\label{eqn:DarcyBC}\\
\partial_{t}\hat u &= -\dive_{\hat\Gamma}(\hat u\hat w) + \varepsilon\Delta_{\hat\Gamma}\hat u - \hat{d}(\hat w,\hat u) + \hat{a}(\hat u,1)&&\text{ on } (0,\hat T)\times\hat\Gamma,\label{eqn:Ezrin}\\
\hat u(0,\cdot) &=\hat u_0 &&\text{ on } \hat\Gamma,\label{eqn:EzrinBC}
\end{align}
in which all quantities can be expected to be roughly of size 1 and in which $\hat d$ and $\hat a$ are defined by \eqref{eqn:desorption}-\eqref{eqn:adsorption}.


\section{Existence and regularity of the solution} \label{sec:Analysis}

For ease of notation, from now on we remove the hat on the dimensionless variables.
The reduced model consists of two differential equation systems, one for the velocity field $w$, which is independent of $u$ and thus can be solved separately,
and one for the membranous Ezrin concentration $u$, into which the velocity $w$ enters via advection and as a parameter in the desorption term.
Both sets of equations are standard, and in this \namecref{sec:Analysis} we briefly summarize the well-posedness of the model.

For a bounded, relatively open domain $A$ we will denote by $L^q(A)$ and $W^{m,q}(A)$ the real Lebesgue and Sobolev spaces on $A$ with exponent $q$ and (potentially fractional or negative) differentiability order $m$;
in case of $X$-valued function spaces for some vector space $X$ we indicate the image space via $L^q(A;X)$ and $W^{m,q}(A;X)$, respectively (the same notation is used for the following function spaces).
The corresponding Hilbert spaces are called $H^m(A)=W^{m,2}(A)$.
The space of $m$ times continuously differentiable functions on $\overline A$ is denoted $C^m(\overline A)$ and of the corresponding H\"older-differentiable functions with exponent $\alpha$ by $C^{m,\alpha}(\overline A)$.
Finally, the space of Radon measures on $\overline A$ (the dual space to $C^0(\overline A)$) is denoted $\meas(\overline A)$.

We will either use $A=\Omega$ or $A=\Gamma$.
In the latter case, recall that the function spaces can be defined on geodesically complete compact manifolds via charts, that is,
we consider a finite atlas $(U_i, x_i)_{1\leq i \leq N}$ of $\Gamma$ and a smooth partition $\psi_i:U_i\to[0,1]$, $i=1,\ldots,N$, of unity on $\Gamma$
so that the function space $X(\Gamma)$ is defined as the set of all functions $g:\Gamma\to\R$ such that $(\psi_ig)\circ x_i^{-1}\in X(x_i(U_i))$,
and the corresponding norm is defined as
\begin{equation*}
\|g\|_{X(\Gamma)}=\sum_{i=1}^N \|(\psi_i v)\circ x^{-1}_i\|_{X(x_i(U_i))}\,.
\end{equation*}
As long as the charts are bi-Lipschitz (which is the case for Lipschitz $\Gamma$), all spaces $C^{0,\alpha}(\Gamma)$ and $W^{m,p}(\Gamma)$ with $m\leq1$ are well-defined by this procedure,
that is, the norm depends on the chosen atlas, but all atlases lead to equivalent norms.
The definition of more regular function spaces on $\Gamma$ then requires correspondingly smoother $\Gamma$.

\subsection{Influence of the bulk force on Darcy flow}
For the well-posedness of the differential equation for membranous Ezrin we will have to bound the influence of the advection term.
For this we will require a sufficiently smooth flow $w$ tangentially to the membrane.
Using standard results for elliptic differential equations, this could be achieved by requiring sufficiently high regularity of the bulk force $f$ and the domain boundary $\Gamma$.
However, recalling that the bulk force is produced by the actin brushes in the cell interior, we believe it more natural to obtain the necessary flow regularity from the positive distance of $f$ to the cell boundary.
As for the regularity of $\Gamma$, we only assume that the cell $\Omega$ is convex, which before blebbing is certainly a reasonable assumption.
The convexity entails that $\Gamma$ is Lipschitz, the minimum requirement to make sense of (weak) gradients of functions on $\Gamma$ and thus of the differential equation \eqref{eqn:reactionAdvectionDiffusion} for Ezrin.

For simplicity, we will sometimes consider the second-order form of Darcy's equations,
\begin{align}
\Delta p&=\dive f
&&\text{in }\Omega,\label{eqn:DarcyP}\\
\nabla p\cdot n&=f\cdot n
&&\text{on }\Gamma,\label{eqn:DarcyPBC}
\end{align}
which is equivalent to \eqref{eqn:Darcy1}-\eqref{eqn:DarcyBC} by taking the divergence in \eqref{eqn:Darcy1}.

\begin{lemma}[Fundamental solution]
Let $f=D\delta_z$ for $D\in\R^n$ and $\delta_z$ the Dirac distribution in $z\in\R^n$, $n>1$.
Then \eqref{eqn:DarcyP} is solved in the distributional sense by $p=D\cdot \Psi(\cdot-z)$ with
\begin{equation*}
\Psi(x)=\frac1{\omega_n}\frac{x}{|x|^n}\,,
\end{equation*}
where $\omega_n$ denotes the surface area of the $n$-dimensional unit ball.
\end{lemma}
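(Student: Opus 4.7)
The plan is to recognize $\Psi$ as the gradient of the classical Newton/logarithmic kernel and then to reduce the identity $\Delta p = \dive f$ to the well-known identity $\Delta\Phi = \delta_0$ by commuting derivatives distributionally.

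Concretely, I would first introduce the standard fundamental solution of the Laplacian,
\begin{equation*}
\Phi(x) = \begin{cases} -\frac{1}{(n-2)\omega_n}|x|^{2-n} & \text{if } n\geq 3,\\ \frac{1}{2\pi}\log|x| & \text{if } n=2,\end{cases}
\end{equation*}
which is locally integrable on $\R^n$ and satisfies $\Delta\Phi = \delta_0$ in $\mathcal{D}'(\R^n)$. A direct computation on $\R^n\setminus\{0\}$ gives $\nabla\Phi(x) = \frac{1}{\omega_n}\frac{x}{|x|^n} = \Psi(x)$, and since $|\Psi(x)|\sim|x|^{1-n}$ is locally integrable (polar coordinates), this classical gradient coincides with the distributional one. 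Hence $p(x) = D\cdot\nabla\Phi(x-z)$ is $L^1_{\mathrm{loc}}(\R^n)$ and defines a distribution.

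Second, I would rewrite $p = \dive(\vec D\,\Phi(\cdot-z))$, where $\vec D$ denotes the constant vector field $x\mapsto D$. This trick puts the less regular factor $\Phi$ inside one divergence rather than working with $\Psi$ directly, which keeps the formal manipulations rigorously justified. Testing against an arbitrary $\varphi\in C_c^\infty(\R^n)$ and integrating by parts twice then yields
\begin{equation*}
\langle\Delta p,\varphi\rangle = \langle p,\Delta\varphi\rangle = -D_i\int\Phi(x-z)\,\partial_i\Delta\varphi(x)\,\d x = -D_i\langle\Delta\Phi(\cdot-z),\partial_i\varphi\rangle = -D\cdot\nabla\varphi(z),
\end{equation*}
where the last equality uses $\Delta\Phi = \delta_0$ and the translation invariance of the distributional derivative. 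On the other hand, $\langle\dive f,\varphi\rangle = -\langle D\delta_z,\nabla\varphi\rangle = -D\cdot\nabla\varphi(z)$, which matches.

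The only genuine subtlety is justifying the two integrations by parts despite the singularity of $\nabla\Phi$ at the origin; this can be handled either by exploiting $\Phi\in L^1_{\mathrm{loc}}$ directly (so the second integration by parts is against the smooth compactly supported function $\partial_i\Delta\varphi$) or, equivalently, by removing a ball $B_\varepsilon(z)$, integrating by parts classically, and sending $\varepsilon\to0$ using that the boundary contributions on $\partial B_\varepsilon(z)$ vanish at the correct rate. Either way, no further regularity of $f$ or the domain is needed, since the statement is purely about a distributional identity on $\R^n$.
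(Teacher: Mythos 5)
Your proof is correct and follows the same route the paper indicates, namely recognizing $\Psi$ as the gradient of the fundamental solution $\Phi$ of Poisson's equation and reducing the claim to $\Delta\Phi=\delta_0$; the paper merely asserts this can be "checked by straightforward calculation," whereas you carry out the distributional integration by parts in detail. No gaps.
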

\begin{proof}
This can be checked by straightforward calculation or by noticing that $\Psi$ is nothing else than the gradient of the funcamental solution to Poisson's equation, however,
it is also well-known in electro-encephalography research, where $D\cdot\Psi$ is the electric field induced by a voltage dipole as they occur in the human brain \cite[(12)]{Ref:Sarvas1987}.
\end{proof}

\begin{theorem}[Existence and regularity]\label{thm:existenceDarcy}
Let $\Omega\subset\R^n$ be convex, let $f\in\meas(\overline\Omega;\R^n)$ be compactly supported in $\Omega$, $n>1$.
Then there exists a (unique up to a constant) distributional solution $p$ to \eqref{eqn:DarcyP}-\eqref{eqn:DarcyPBC},
which satisfies
\begin{equation*}
p(x)=\tilde p(x)+\int_\Omega\Psi(x-z)\cdot\d f(z)
\end{equation*}
for some $\tilde p\in C^{0,1}(\overline\Omega)$, and for any $s>n$ there exists a constant $C(s,n,\Omega)$ with
\begin{equation*}
\|\tilde p\|_{C^{0,1}(\overline\Omega)}
\leq C(s,n,\Omega)\dist(\spt f,\Gamma)^{-s}\|f\|_{\meas(\overline\Omega)}\,.
\end{equation*}
As a consequence, the unique distributional solution $(w,p)$ to \eqref{eqn:Darcy1}-\eqref{eqn:DarcyBC} satisfies
\begin{equation*}
\|w\|_{L^\infty(\Gamma)}\leq C(s,n,\Omega)\dist(\spt f,\Gamma)^{-s}\|f\|_{\meas(\overline\Omega)}\,.
\end{equation*}
\end{theorem}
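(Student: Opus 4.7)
My plan is to split the solution into a Newtonian-potential piece $p_1(x) := \int_\Omega \Psi(x-z)\cdot \d f(z)$ carrying the full singularity of $f$, plus a smooth correction $\tilde p := p - p_1$. By the preceding lemma $\Delta p_1 = \dive f$ distributionally on $\R^n$, so $\tilde p$ must be harmonic on $\Omega$ and, using $f\cdot n = 0$ on $\Gamma$ (since $\spt f$ lies strictly in the interior), must satisfy the Neumann condition $\nabla\tilde p\cdot n = -\nabla p_1\cdot n =: g$ on $\Gamma$. The compatibility condition $\int_\Gamma g \,\d\hd^{n-1} = 0$ follows from $\int_\Omega \dive f = 0$ by a bump-function integration by parts between $\spt f$ and $\Gamma$.

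The first half of the argument is the easy one: quantifying the regularity of $p_1$ on the tubular neighbourhood $U_{\delta/2} := \{x\in\overline\Omega : \dist(x,\Gamma) < \delta/2\}$, where $\delta := \dist(\spt f,\Gamma)$. Using $|\nabla^k\Psi(y)| \leq C_k|y|^{-(n-1+k)}$ together with $|x-z|\geq \delta/2$ for $x\in U_{\delta/2}$ and $z\in\spt f$, differentiation under the integral sign yields
\begin{equation*}
\|\nabla^k p_1\|_{L^\infty(U_{\delta/2})} \leq C_k\,\delta^{-(n-1+k)}\,\|f\|_{\meas(\overline\Omega)} \qquad\text{for every integer }k\geq 0,
\end{equation*}
and hence $\|g\|_{C^k(\Gamma)} \leq C_k\,\delta^{-(n+k)}\,\|f\|_{\meas(\overline\Omega)}$.

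The harder half is to bound $\|\tilde p\|_{C^{0,1}(\overline\Omega)}$ by $\|g\|_{C^k(\Gamma)}$ for a suitable $k = k(n)$, exploiting the convexity of $\Omega$. A natural route is to combine (i) variational existence of $\tilde p\in H^1(\Omega)$, unique up to additive constants; (ii) Grisvard's $H^2$-regularity theorem for the Neumann Laplacian on convex domains; and (iii) a tangential-differentiation bootstrap followed by Sobolev embedding into $W^{1,\infty}$. The principal difficulty is that convex Lipschitz domains need not be $C^{1,1}$: at boundary corners, $\tilde p$ may develop singularities of the form $r^{\pi/\theta}$ with $\pi/\theta > 1$, which obstruct classical $C^{1,\alpha}$-regularity. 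Fortunately only global $W^{1,\infty}$-regularity is required here, and such corner singularities are themselves Lipschitz, so convexity provides exactly the right amount of regularity.

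Combining the two bounds yields $\|\tilde p\|_{C^{0,1}(\overline\Omega)} \leq C(\Omega,n,k)\,\delta^{-(n+k)}\,\|f\|_{\meas(\overline\Omega)}$; absorbing $k$ into the exponent produces the stated estimate for every $s > n$. Uniqueness of $p$ modulo a constant is inherited from the uniqueness of harmonic functions with vanishing Neumann data. Finally, on $\Gamma$ we have $w = -\nabla p = -\nabla\tilde p - \nabla p_1$ (since $f = 0$ there), so $\|w\|_{L^\infty(\Gamma)}$ is bounded by the $C^{0,1}$-norm of $\tilde p$ plus $\|\nabla p_1\|_{L^\infty(\Gamma)} \leq C\,\delta^{-n}\,\|f\|_{\meas(\overline\Omega)}$, both of which fit into the right-hand side $C(s,n,\Omega)\,\delta^{-s}\,\|f\|_{\meas(\overline\Omega)}$ for any $s > n$.
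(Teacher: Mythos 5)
Your decomposition into the Newtonian potential $p_1=\int_\Omega\Psi(\cdot-z)\cdot\d f(z)$ plus a harmonic correction $\tilde p$ is the same starting point as the paper's, and your near-boundary estimate $\|\nabla^k p_1\|_{L^\infty(U_{\delta/2})}\lesssim\delta^{-(n-1+k)}\|f\|_{\meas(\overline\Omega)}$ is correct. The gap is exactly where you locate the ``harder half'': the route you sketch for bounding $\|\tilde p\|_{C^{0,1}(\overline\Omega)}$ --- Grisvard's $H^2$ theorem, a tangential-differentiation bootstrap, then Sobolev embedding into $W^{1,\infty}$ --- does not go through on a convex domain that is merely Lipschitz. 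First, $H^2(\Omega)$ does not embed into $W^{1,\infty}(\Omega)$ for $n\geq2$, so Grisvard alone gives nothing Lipschitz. Second, the bootstrap is unavailable: tangential differentiation needs a smooth boundary, and on Lipschitz $\Gamma$ the spaces $C^k(\Gamma)$, $k\geq1$, in which you place the Neumann datum $g=-\nabla p_1\cdot n$ are not even well defined (the normal $n$ is only $L^\infty$); the corner singularities you yourself mention are precisely what obstruct any $C^{1,\alpha}$ estimate and hence any classical bootstrap. Your intuition that convexity supplies ``exactly the right amount of regularity'' is correct, but it is the content of a specific theorem (Mazya's global Lipschitz bound for the Neumann problem on convex domains, $\|\hat p\|_{C^{0,1}(\overline\Omega)}\leq C\|\Delta\hat p\|_{L^q(\Omega)}$ for $q>n$), and that theorem takes a \emph{bulk} right-hand side in $L^q$ with \emph{homogeneous} Neumann data --- not smooth inhomogeneous boundary data, which is what your $\tilde p$ carries.

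To close the gap you must therefore convert the boundary datum into a bulk datum before invoking convexity. The paper does this by cutting off the convolution kernel at scale $R=\dist(\spt f,\Gamma)$: the far-field part $P=\int_\Omega[(1-\chi_R)\Psi](\cdot-z)\cdot\d f(z)$ is smooth, its Laplacian $(\Psi\Delta\chi_R+2\nabla\chi_R\cdot\nabla\Psi)\ast f$ lies in $L^q$ with norm $\lesssim R^{-n-1+n/q}\|f\|_{\meas(\overline\Omega)}$ by Young's inequality, and the correction solves $\Delta\hat p=\Delta P$ with $\nabla\hat p\cdot n=0$, to which Mazya's estimate applies directly; letting $q\downarrow n$ yields every exponent $s>n$. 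An equivalent repair inside your own setup: pick a cutoff $\eta$ equal to $1$ near $\Gamma$ and supported in your tubular neighbourhood; then $\tilde p+\eta p_1$ has homogeneous Neumann data and bulk Laplacian $p_1\Delta\eta+2\nabla\eta\cdot\nabla p_1\in L^\infty(\Omega)\subset L^q(\Omega)$, controlled by your $p_1$ estimates, and Mazya finishes the proof. The remaining pieces of your argument --- uniqueness up to constants, the compatibility condition, and the passage from the $C^{0,1}$ bound to $\|w\|_{L^\infty(\Gamma)}$ --- are fine.
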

\begin{proof}

Abbreviate $R=\dist(\spt f,\Gamma)$, let $\chi:\R^n\to[0,1]$ be a smooth radially symmetric function with $\chi(x)=1$ for $|x|<1/2$ and $\chi(x)=0$ for $|x|>1$, and set $\chi_R(x)=\chi(x/R)$ as well as
\begin{equation*}
P(x)=\int_\Omega[(1-\chi_R)\Psi](x-z)\cdot\d f(z)\,.
\end{equation*}
As a convolution of a measure with an infinitely smooth function, $P$ is smooth.
Furthermore, let $\hat p\in W^{1,2}(\Omega)$ denote the unique weak solution with zero mean to
\begin{equation*}
\Delta\hat p=\Delta P\quad\text{in }\Omega,\qquad
\nabla\hat p\cdot n=0\quad\text{on }\Gamma.
\end{equation*}
Then
\begin{equation*}
p(x)=\hat p(x)+\int_\Omega[\chi_R\Psi](x-z)\cdot\d f(z)
\end{equation*}
solves (uniquely up to a constant) \eqref{eqn:DarcyP}-\eqref{eqn:DarcyPBC}.
Now by the Lipschitz regularity result \cite[\S2]{Ref:Mazya2009} for the Neumann problem on convex domains we have
\begin{equation*}
\|\hat p\|_{C^{0,1}(\overline\Omega)}\leq\tilde C\left\|\Delta P\right\|_{L^q(\Omega)}
\end{equation*}
for any $q>n$, where the constant $\tilde C>0$ depends on $\Omega$, $n$, and $q$. Now by Young's convolution inequality we can estimate
\begin{align*}
\left\|\Delta P\right\|_{L^q(\Omega)}
&=\left[\int_\Omega\left|\int_\Omega\Delta\left[(1-\chi_R)\Psi\right](x-z)\cdot\d f(z)\right|^q\d x\right]^{\frac1q}\\
&\leq\left[\int_\Omega\left(\int_\Omega\left|\Delta\left[(1-\chi_R)\Psi\right](x-z)\right|\d\frac{|f|}{\|f\|_{\meas(\overline\Omega)}}(z)\right)^q\d x\right]^{\frac1q}\|f\|_{\meas(\overline\Omega)}\\
&\leq\left[\int_\Omega\int_\Omega\left|\Delta\left[(1-\chi_R)\Psi\right](x-z)\right|^q\d\frac{|f|}{\|f\|_{\meas(\overline\Omega)}}(z)\,\d x\right]^{\frac1q}\|f\|_{\meas(\overline\Omega)}\\
&\leq\left[\int_\Omega\int_\Omega\left|\Delta\left[(1-\chi_R)\Psi\right](x-z)\right|^q\d x\,\d\frac{|f|}{\|f\|_{\meas(\overline\Omega)}}(z)\right]^{\frac1q}\|f\|_{\meas(\overline\Omega)}\\
&=\|\Psi\Delta\chi_R+2\nabla\chi_R\cdot\nabla\Psi\|_{L^q(\R^n)}\|f\|_{\meas(\overline\Omega)}\\
&\leq\|\chi\|_{C^2(\R^n)}\left(R^{-2}\|\Psi\|_{L^q(\R^n\setminus B_R(0))}+R^{-1}\|\nabla\Psi\|_{L^q(\R^n\setminus B_R(0))}\right)\|f\|_{\meas(\overline\Omega)}\\
&\leq\hat CR^{-n-1+n/q}\|f\|_{\meas(\overline\Omega)}
\end{align*}
for some constant $\hat C$ depending only on $\chi$, $\Omega$, $n$, and $q$.
Summarizing, we have
\begin{equation*}
\|\hat p\|_{C^{0,1}(\overline\Omega)}\leq\tilde C\hat CR^{-n-1+n/q}\|f\|_{\meas(\overline\Omega)}\,.
\end{equation*}
Now $\tilde p(x)=p(x)-\int_\Omega\Psi(x-z)\cdot\d f(z)=\hat p(x)-P(x)$ so that
\begin{equation*}
\|\tilde p\|_{C^{0,1}(\overline\Omega)}
\leq\|\hat p\|_{C^{0,1}(\overline\Omega)}+\|P\|_{C^{0,1}(\overline\Omega)}
\leq\tilde C\hat CR^{-n-1+n/q}\|f\|_{\meas(\overline\Omega)}+\bar CR^{-n}\|f\|_{\meas(\overline\Omega)}
\end{equation*}
for some $\bar C>0$ depending on $\chi$. Due to the arbitrariness of $q>n$ this proves the first claim.

As for the estimate on $w$, we also have
\begin{equation*}
\|p\|_{W^{1,\infty}(\Gamma)}=\|p\|_{C^{0,1}(\Gamma)}=\|\hat p\|_{C^{0,1}(\Gamma)}\leq\|\hat p\|_{C^{0,1}(\Omega)}\leq CR^{-n-1+n/q}\|f\|_{\meas(\overline\Omega)}\,,
\end{equation*}
from which the second claim follows by noting $\|w\|_{L^\infty(\Gamma)}=\|\nabla p\|_{L^\infty(\Gamma)}\leq\|p\|_{W^{1,\infty}(\Gamma)}$ (recall that $\nabla p$ is tangential to $\Gamma$).
\end{proof}

\begin{remark}[Regularity of flux at boundary]\label{rem:DarcyRegularity}
Due to $w=-\nabla p+f$, the above result shows that $w$ is in $L^\infty(\Omega\setminus B_\delta(\spt f))$ for $B_\delta$ the $\delta$-neighbourhood,
and the boundary flux is its trace on $\Gamma$ (in the sense that it is the weak-$\ast$ limit of $w$ on surfaces $\Gamma_n\subset\Omega$ approaching $\Gamma$).
By another regularity result for the Neumann problem on convex domains we additionally have $p\in W^{2,2}(\Omega\setminus B_\delta(\spt f))$ so that for $n=3$ we have $w\in H^{1/2}(\Gamma)$ \cite{AdolfssonJerison}.
If higher regularity of $\Gamma$ is assumed, then the boundary flux $w|_\Gamma$ may be even smoother.
In our exposition, though, we will stick to minimal regularity requirements on $\Gamma$.
\end{remark}

On a ball one can explicitly compute a Green's function and thus state an analytical solution, which can provide some intuition on the flow behaviour.

\begin{proposition}[Green's function]\label{thm:DarcyGreenBall}
Equations \eqref{eqn:DarcyP}-\eqref{eqn:DarcyPBC} for $f=D\delta_z$ on $\Omega=B_1(0)\subset\R^3$ the unit ball are solved by $p=D\cdot G_z$ for the Green's function
\begin{multline*}
G_z(x)
=\frac1{4\pi}\left[\left(\frac{x\cdot e-|z|}{|x-z|^3}-\frac{x\cdot e-\frac1{|z|}}{|z|^3|x-\zeta|^3}\right)e\right.\\
\left.+\frac1{|z||x-\zeta|}\left(1+\frac1{|z|^2|x-\zeta|^2}+\frac{|z||x-\zeta|}{|x-z|^3}+\frac{x\cdot e}{\frac1{|z|}-x\cdot e+|x-\zeta|}\right)\left(I-e\otimes e\right)x\right]\,,
\end{multline*}
where $\zeta=z/|z|^2$ and $e=z/|z|$.
Thus, the solution for general $f$ is given by
\begin{equation*}
p(x)=\int_\Omega G_z(x)\cdot\d f(z)\,.
\end{equation*}
\end{proposition}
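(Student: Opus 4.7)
My plan is to derive the closed form for $G_z$ from the scalar Neumann Green's function $N(x,z)$ for the Laplacian on $B_1(0)\subset\R^3$, thereby reducing the proof to a verification of the relevant properties of $N$ together with an algebraic evaluation of $\nabla_zN$. Setting $G_z(x):=\nabla_zN(x,z)$ and noting that the dipole problem \eqref{eqn:DarcyP}--\eqref{eqn:DarcyPBC} is automatically compatible (the source $\dive(D\delta_z)$ has vanishing integral against constants, so no volume correction is needed in $N$), one finds $\Delta_x(D\cdot G_z)=D\cdot\nabla_z\Delta_xN=D\cdot\nabla_x\delta_z=\dive(D\delta_z)$ and $\nabla_x(D\cdot G_z)\cdot n=D\cdot\nabla_z(\nabla_xN\cdot n)|_{\partial B_1}=0$, by commuting $\nabla_z$ with the boundary restriction in the homogeneous Neumann condition of $N$. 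It thus suffices to identify $\nabla_zN$ with the expression in the statement.

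The ansatz I would use is
\begin{equation*}
N(x,z)=\frac{1}{4\pi|x-z|}+\frac{1}{4\pi|z||x-\zeta|}-\frac{1}{4\pi}\log\bigl(\tfrac{1}{|z|}-x\cdot e+|x-\zeta|\bigr),
\end{equation*}
whose first term is the free-space fundamental solution, whose second is the Kelvin image Coulomb term (harmonic on $B_1$ since $\zeta\notin\overline{B_1}$), and whose third is a harmonic correction restoring the Neumann boundary data. Harmonicity of the log term on $\overline{B_1}$ follows from the computation $\Delta_x\log u=\Delta_xu/u-|\nabla_xu|^2/u^2$ with $u=\tfrac{1}{|z|}-x\cdot e+|x-\zeta|$, using $\Delta_x|x-\zeta|=2/|x-\zeta|$ and the pleasant identity $|\nabla_xu|^2=2u/|x-\zeta|$; the Neumann boundary condition $\nabla_xN\cdot n=0$ on $\partial B_1$ is then easiest to verify via the classical reflection identity $|z||x-\zeta|=|x-z|$ valid on $|x|=1$.

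To conclude, I would compute $\nabla_zN$ term by term using $\nabla_z|z|=e$, $\nabla_z\zeta=|z|^{-2}(I-2e\otimes e)$, and $\nabla_z(x\cdot e)=|z|^{-1}(I-e\otimes e)x$. The first term of $N$ yields $\Psi(x-z)$, which in the $(e,(I-e\otimes e)x)$-decomposition of the statement contributes the singular $|x-z|^{-3}$-expressions thanks to the identities $x\cdot e-|z|=(x-z)\cdot e$ and $(I-e\otimes e)x=(I-e\otimes e)(x-z)$. The second term contributes the $|z|^{-3}|x-\zeta|^{-3}$-expressions plus an auxiliary $e$-component, and the log term contributes, after simplification using the denominator $u$, an $e$-component that cancels the auxiliary one exactly, together with the coefficient $(|z||x-\zeta|+1)/(|z|u|x-\zeta|)$ of $(I-e\otimes e)x$, which rearranges to $\tfrac{1}{|z||x-\zeta|}\bigl(1+\tfrac{x\cdot e}{1/|z|-x\cdot e+|x-\zeta|}\bigr)$ as required.

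The principal obstacle is the algebraic bookkeeping in the last step: the nontrivial cancellation of the auxiliary $e$-components arising from the image and log terms, and the rearrangement of the $(I-e\otimes e)x$-coefficient into the compact form of the statement. Both are purely computational but lengthy; no conceptual difficulty arises beyond the somewhat pleasant observation that $\log u$ happens to be harmonic on $\overline{B_1}$ thanks to the specific form of $u$.
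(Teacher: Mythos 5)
Your route is essentially the paper's own ``alternative'' argument: represent $G_z$ as the $z$-gradient of the scalar Neumann Green's function of the ball and reduce the claim to known properties of that scalar function plus algebra (the paper simply cites DiBenedetto for the scalar Green's function instead of re-deriving it, and offers direct verification as a first option). Your identification of the first term with $\Psi(x-z)$, the harmonicity check for the logarithmic term via $|\nabla_x u|^2=2u/|x-\zeta|$, and the listed $z$-derivatives are all correct.

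One intermediate claim, however, is false, and the verification you propose for it would fail: your $N$ cannot satisfy $\nabla_xN\cdot n=0$ on $\partial B_1$. Since $\Delta_xN=-\delta_z$ in $B_1$ with no volume correction, the divergence theorem forces $\int_{\partial B_1}\nabla_xN\cdot n\,\d\hd^2=-1$. What is actually true --- and what the computation with the reflection identity $|z||x-\zeta|=|x-z|$ yields (with $r=|x-z|$, the two Coulomb terms contribute $-1/r$ to $4\pi\,\nabla_xN\cdot n$ and the logarithmic term contributes $1/r-1$) --- is $\nabla_xN\cdot n\equiv-\tfrac1{4\pi}$ on $\partial B_1$. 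This constant is independent of $z$, so $\nabla_z(\nabla_xN\cdot n)=0$ and your conclusion $\nabla_x(D\cdot G_z)\cdot n=0$ survives; equivalently you could work with the volume-corrected function satisfying $\Delta\tilde G_z=\delta_z-\tfrac3{4\pi}$ and $\nabla\tilde G_z\cdot n=0$, as the paper does, the correction again being annihilated by $\nabla_z$. With this repair (and noting that the $z$-dependent additive normalisation of your log term only shifts $G_z$ by an $x$-independent vector, hence $p$ by an irrelevant constant), your argument is sound, modulo the final algebraic matching, which you rightly flag as lengthy but routine.
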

\begin{proof}
One can check by explicit calculation that $G_z$ solves the desired system.
Alternatively, one can exploit $G_z(x)=-\nabla_z\tilde G_z(x)$, where $\tilde G_z$ is the Green's function for the Neumann problem,
$\Delta\tilde G_z=\delta_z-\frac3{4\pi}$ in $\Omega$ with $\nabla\tilde G_z\cdot n=0$ on $\partial\Omega$,
which is for instance stated in \cite[\S7.1.2c]{Ref:DiBenedetto2010} (note that the first term in that reference should be corrected to have a negative sign).
\end{proof}

\begin{remark}[Regions of maximal boundary velocity in a round cell]
\Cref{thm:existenceDarcy} provides an upper bound on the maximum boundary velocity, which diverges as the forces $f$ approach the boundary $\Gamma$.
Of course, this does not automatically imply the blow-up of the boundary velocity as the forces get closer to $\Gamma$
(indeed, the boundary velocity will for instance be zero if the forces are arranged within a small ball such that the net force outside is zero).
However, within a cell we do not expect force cancellation
so that a rough intuition of the flow can be obtained by approximating the actin brushes with a point force $f$.
Then, approximating the cell by a ball $\Omega=B_1(0)$ with the cell front being the northpole $e_3=(0\,0\,1)^T$ and the actin brush $f=e_3\delta_{ce_3}$ underneath for some $c\in(0,1)$,
the analytical solution from \cref{thm:DarcyGreenBall} simplifies to
\begin{equation*}
p(x)
=\frac1{4\pi}\left(\frac{x_3-c}{|x-e_3c|^3}-\frac{x_3-1/c}{c^3|x-e_3/c|^3}\right)\,.
\end{equation*}
The resulting velocity $w$ at the boundary $\partial B_1(0)$ is given by
\begin{equation*}
w(x)=\frac3{4\pi}\frac{1-c^2}{|x-z|^5}(x_3x-e_3)\,,
\end{equation*}
pointing south along great circles.
Its magnitude is given by
\begin{equation*}
|w(x)|=\frac3{4\pi}\frac{1-c^2}{|x-z|^5}\sqrt{1-x_3^2}
=\frac3{4\pi}\frac{1-c^2}{\sqrt{(x_3-c)^2+1-x_3^2}^5}\sqrt{1-x_3^2}\,,
\end{equation*}
where $1-x_3^2$ is the squared distance of $x$ to the vertical axis.
This is maximized at $x_3=(\sqrt{(c^2+1)^2+60c^2}-(c^2+1))/6c\sim1-(c-1)^2/8=1-\dist(\spt f,\Gamma)^2/8$, for which $\sqrt{1-x_3^2}\sim\dist(\spt f,\Gamma)/\sqrt8$.
In other words, the point of maximal boundary velocity occurs roughly $\dist(\spt f,\Gamma)/\sqrt8$ away from the cell front,
and the velocity there indeed scales like $\dist(\spt f,\Gamma)^{-3}$.
\end{remark}

\subsection{Well-posedness of the Ezrin equation}

The well-posedness of \eqref{eqn:Ezrin}-\eqref{eqn:EzrinBC} can be obtained via classical semigroup theory (we will later briefly discuss other approaches as well).
To this end we abbreviate
\begin{align*}
L&=\varepsilon\Delta_\Gamma
\qquad\text{and}\\
F_w(u) &= -\mathrm{div}_{\Gamma}(uw) - d(w,u) + a(u,1)
\end{align*}
so that \eqref{eqn:Ezrin}-\eqref{eqn:EzrinBC} turn into
\begin{align}
\partial_t u&=Lu+F_w(u)&&\text{on }(0,T)\times\Gamma\,,\label{eqn:reactionAdvectionDiffusion}\\
u(0,\cdot)&=u_0&&\text{on }\Gamma\,.\label{eqn:reactionAdvectionDiffusionBC}
\end{align}
Furthermore, the bounded operator which maps an element $v_0\in L^q(\Gamma)$ to the unique solution $v$ of the Cauchy problem 
\begin{equation*}
  \partial_t v = Lv,\quad
  v(0,\cdot) = v_0,
\end{equation*}
will be denoted by $e^{tL}$, thus $v(t,\cdot)=e^{tL}v_0$.
The short-term existence of a solution to \eqref{eqn:reactionAdvectionDiffusion}-\eqref{eqn:reactionAdvectionDiffusionBC} is based on the following Lipschitz property of $F_w$.

\begin{lemma}[Lipschitz property of $F_w$]\label{thm:LipschitzProperty}
Let $\Gamma=\partial\Omega\subset\R^3$ be bounded and Lipschitz and $w\in L^\infty(\Gamma)$.
$F_w$ is locally Lipschitz from $L^q(\Gamma)$ into $W^{-1,q}(\Gamma)$ for any $q\geq\max\{\zeta,2\zeta-2\}$,
where $\zeta>1$ is the exponent in \eqref{eqn:desorption}.
\end{lemma}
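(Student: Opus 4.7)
The plan is to split $F_w$ into the advection $u\mapsto -\dive_\Gamma(uw)$, the adsorption $u\mapsto a(u,1)$, and the desorption $u\mapsto-d(w,u)$, and to treat each piece separately. The advection is linear: since $w\in L^\infty(\Gamma)$, the multiplication $u\mapsto uw$ is bounded from $L^q(\Gamma)$ into $L^q(\Gamma;\R^n)$, and $\dive_\Gamma$ maps from there continuously into $W^{-1,q}(\Gamma)$ by duality with the tangential gradient, yielding a global Lipschitz constant controlled by $\|w\|_{L^\infty(\Gamma)}$. The scalar adsorption nonlinearity $s\mapsto a(s,1)$ is, after a standard extension at negative arguments, bounded and globally Lipschitz on $\R$, so its Nemytskii operator is globally Lipschitz from $L^q(\Gamma)$ into $L^q(\Gamma)$, and $L^q(\Gamma)\hookrightarrow W^{-1,q}(\Gamma)$ because $\Gamma$ has finite surface measure.

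The main work lies with the desorption $d(w,u)=(C_1|w|+C_2)g(u)$, where $g(s)=s$ for $s\leq 1$ and $g(s)=(s^\zeta+\zeta-1)/\zeta$ for $s>1$; in particular $g\in C^1(\R)$ with $|g'(s)|\lesssim 1+|s|^{\zeta-1}$. By the mean value theorem together with H\"older's inequality (using the split $\zeta/q=(\zeta-1)/q+1/q$) I would obtain the local Nemytskii estimate
\begin{equation*}
\|g(u)-g(\tilde u)\|_{L^{q/\zeta}(\Gamma)}\leq C\bigl(1+\|u\|_{L^q(\Gamma)}^{\zeta-1}+\|\tilde u\|_{L^q(\Gamma)}^{\zeta-1}\bigr)\|u-\tilde u\|_{L^q(\Gamma)}\,,
\end{equation*}
where the first part of the hypothesis, $q\geq\zeta$, is used to ensure $q/\zeta\geq 1$. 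Multiplication by $C_1|w|+C_2\in L^\infty(\Gamma)$ preserves this bound. To convert the $L^{q/\zeta}$ estimate into the desired $W^{-1,q}$ estimate I would invoke, by duality, the Sobolev embedding on the two-dimensional Lipschitz surface $\Gamma\subset\R^3$: the needed inclusion $L^{q/\zeta}(\Gamma)\hookrightarrow W^{-1,q}(\Gamma)$ is equivalent to $W^{1,q'}(\Gamma)\hookrightarrow L^{q/(q-\zeta)}(\Gamma)$, which a brief exponent computation shows to hold precisely when $q\geq 2\zeta-2$.

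The key difficulty is exactly this interplay: the superlinear growth $g(s)\sim s^\zeta$ costs integrability in the Nemytskii step, and that integrability has to be recovered through the $2$D Sobolev embedding on $\Gamma$; the two resulting exponent constraints combine into the stated hypothesis $q\geq\max\{\zeta,2\zeta-2\}$. Assembling the three partial bounds finally yields the local Lipschitz property claimed in the lemma.
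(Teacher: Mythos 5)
Your proposal is correct and follows essentially the same route as the paper: the same three-way splitting of $F_w$, the same $L^\infty$--$L^q$ bound for the advection, the same Lipschitz Nemytskii argument for the adsorption, and the same local Lipschitz estimate of $d(w,\cdot)$ from $L^q(\Gamma)$ into $L^{q/\zeta}(\Gamma)$ followed by the embedding $L^{q/\zeta}(\Gamma)\hookrightarrow W^{-1,q}(\Gamma)$. The only difference is that you make explicit, via the dual Sobolev embedding $W^{1,q'}(\Gamma)\hookrightarrow L^{q/(q-\zeta)}(\Gamma)$ on the two-dimensional surface, why this last step requires $q\geq 2\zeta-2$ — a computation the paper leaves implicit — which is a welcome clarification rather than a deviation.
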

\begin{proof}
By H\"older's inequality we have
\begin{equation*}
\|uw\|_{L^q(\Gamma)}\leq\|u\|_{L^q(\Gamma)}\|w\|_{L^\infty(\Gamma)}
\end{equation*}
so that $\|\dive(uw)\|_{W^{-1,q}(\Gamma)}\leq n\|w\|_{L^\infty(\Gamma)}\|u\|_{L^q(\Gamma)}$.
Furthermore, letting $\ell$ denote the Lipschitz constant of $a(\cdot,1)$, we obtain
\begin{equation*}
\|a(u_1,1)-a(u_2,1)\|_{W^{-1,q}(\Gamma)}
\leq\|a(u_1,1)-a(u_2,1)\|_{L^q(\Gamma)}
\leq\ell\|u_1-u_2\|_{L^q(\Gamma)}\,.
\end{equation*}
Finally, abbreviating $r=\frac q\zeta\geq1$ we have by H\"older's inequality
\begin{equation*}
\|d(w,u)\|_{L^r(\Gamma)}
\leq\|C_1w+C_2\|_{L^\infty(\Gamma)}\|1+|u|^\zeta\|_{L^r(\Gamma)}
\leq\|C_1w+C_2\|_{L^\infty(\Gamma)}\left(\|1\|_{L^r(\Gamma)}+\|u\|^\zeta_{L^q(\Gamma)}\right)\,,
\end{equation*}
and the directional derivative of $d(w,u)$ in direction $\phi\in L^q(\Gamma)$ satisfies
\begin{align*}
\|\partial_ud(w,u)(\phi)\|_{L^r(\Gamma)}
&=\left\|(C_1|w|+C_2)\max\{1,\mathrm{sign}(u)|u|^{\zeta-1}\}\phi\right\|_{L^r(\Gamma)}\\
&\leq\|C_1w+C_2\|_{L^\infty(\Gamma)}\|\phi\|_{L^q(\Gamma)}\|1+|u|^{\zeta-1}\|_{L^{\frac{q}{\zeta-1}}(\Gamma)}\\
&\leq\|C_1w+C_2\|_{L^\infty(\Gamma)}\|\phi\|_{L^q(\Gamma)}\left(\|1\|_{L^{\frac{q}{\zeta-1}}(\Gamma)}+\|u\|_{L^q(\Gamma)}^{\zeta-1}\right)\,.
\end{align*}
Consequently, $d(w,\cdot)$ lies in $L^r(\Gamma)$ and is locally Lipschitz (even G\^ateaux differentiable) from $L^q(\Gamma)$ into $L^r(\Gamma)$.
Due to the Sobolev embedding of $L^r(\Gamma)$ into $W^{-1,q}(\Gamma)$ we arrive at the desired result.
\end{proof}

\begin{theorem}[Short-term existence]\label{Thm:ShortTermExistence}
Let $\Gamma$, $w$, and $q>2$ as in \cref{thm:LipschitzProperty} and $u_0\in L^q(\Gamma)$.
Then there exists $\hat t > 0$ depending on $\Gamma$ and $\|u_0\|_{L^q(\Gamma)}$ such that the initial value problem
\begin{equation}\label{Eq:ProblemAbstract}
  \partial_t u = Lu + F_w(u), \quad u(0,\cdot) = u_0
\end{equation}
on $\Gamma$ has a unique weak solution $u\in C^0([0,\hat t]; L^q(\Gamma))\cap C^0((0,\hat t]; C^{0,\alpha}(\Gamma))$ for any $\alpha\in[0,1-2/q)$.
\end{theorem}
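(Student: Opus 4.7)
The plan is to rewrite \eqref{Eq:ProblemAbstract} in its mild (Duhamel) form and run a Banach fixed-point argument in $C^0([0,\hat t];L^q(\Gamma))$, using the smoothing properties of the semigroup $(e^{tL})_{t\geq 0}$ to absorb the fact that the nonlinearity $F_w$ only maps into the negative-order space $W^{-1,q}(\Gamma)$. Concretely, a mild solution solves
\begin{equation*}
u(t)=e^{tL}u_0+\int_0^t e^{(t-s)L}F_w(u(s))\,\d s,
\end{equation*}
so I define $\Phi(u)(t)$ as this right-hand side and look for a fixed point on a closed ball $B_R\subset C^0([0,\hat t];L^q(\Gamma))$ centred at $t\mapsto e^{tL}u_0$.

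The first step is to record the analytic-semigroup properties of $L=\varepsilon\Delta_\Gamma$. Since $\Gamma$ is a closed Lipschitz manifold (no boundary), $L$ generates a bounded analytic semigroup on $L^q(\Gamma)$, and one has the smoothing estimates
\begin{equation*}
\|e^{tL}g\|_{L^q(\Gamma)}\leq C\,t^{-1/2}\|g\|_{W^{-1,q}(\Gamma)},\qquad
\|e^{tL}g\|_{W^{1,q}(\Gamma)}\leq C\,t^{-\sigma}\|g\|_{L^q(\Gamma)}
\end{equation*}
for suitable $\sigma<1$, which on a Lipschitz surface follow either from Gaussian heat-kernel bounds for the Laplace--Beltrami operator or, via the atlas used to define $W^{m,q}(\Gamma)$, from the corresponding estimates for divergence-form elliptic operators on Lipschitz Euclidean pieces. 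Combined with \cref{thm:LipschitzProperty}, which gives a Lipschitz constant $\ell(R)$ for $F_w:B_R\to W^{-1,q}(\Gamma)$, one obtains for $u_1,u_2\in B_R$
\begin{equation*}
\|\Phi(u_1)(t)-\Phi(u_2)(t)\|_{L^q(\Gamma)}
\leq C\,\ell(R)\int_0^t(t-s)^{-1/2}\|u_1(s)-u_2(s)\|_{L^q(\Gamma)}\,\d s
\leq 2C\,\ell(R)\sqrt{\hat t}\,\|u_1-u_2\|_{C^0([0,\hat t];L^q(\Gamma))}.
\end{equation*}
A similar estimate together with the strong continuity $e^{tL}u_0\to u_0$ in $L^q(\Gamma)$ shows that $\Phi$ maps $B_R$ into itself. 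Choosing $\hat t$ sufficiently small in terms of $R$, $\ell(R)$, $\|u_0\|_{L^q(\Gamma)}$, and $\Gamma$ makes $\Phi$ a contraction, and the Banach fixed-point theorem yields a unique mild solution $u\in C^0([0,\hat t];L^q(\Gamma))$.

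For the H\"older regularity on $(0,\hat t]$, I would bootstrap. Since $u$ is already bounded in $L^q(\Gamma)$ on $[0,\hat t]$, \cref{thm:LipschitzProperty} gives $s\mapsto F_w(u(s))$ bounded in $W^{-1,q}(\Gamma)$, so applying the second smoothing estimate to the Duhamel term and the analytic-semigroup bound $\|e^{tL}u_0\|_{W^{1,q}(\Gamma)}\leq Ct^{-1/2}\|u_0\|_{L^q(\Gamma)}$ to the linear part, one concludes $u(t)\in W^{1,q}(\Gamma)$ for every $t\in(0,\hat t]$ with a norm that is locally bounded and continuous in $t$. The Sobolev embedding $W^{1,q}(\Gamma)\hookrightarrow C^{0,\alpha}(\Gamma)$ for $\alpha<1-2/q$ (valid because $\Gamma$ is a $2$-dimensional Lipschitz surface and $q>2$) then delivers the claimed continuity $u\in C^0((0,\hat t];C^{0,\alpha}(\Gamma))$.

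\textbf{Main obstacle.} The delicate point is justifying the $W^{-1,q}\to L^q$ smoothing estimate on a merely Lipschitz manifold: the analytic-semigroup machinery is textbook on smooth $\Gamma$ but on Lipschitz $\Gamma$ one has to invoke the theory of the Laplace--Beltrami on Lipschitz manifolds (or reduce to uniformly elliptic divergence-form operators on Lipschitz charts and patch via the partition of unity), taking care that the Lipschitz constants of the transition maps only affect constants but not the $t^{-1/2}$-scaling. Once this scaling is in hand, the rest of the argument is a routine Picard iteration controlled by $\sqrt{\hat t}$.
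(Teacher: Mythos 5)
Your overall strategy coincides with the paper's: both rewrite \eqref{Eq:ProblemAbstract} in Duhamel form, run a contraction-mapping argument in $C^0([0,\hat t];L^q(\Gamma))$ using the $\mathcal O(t^{-1/2})$ smoothing of $e^{tL}$ from $W^{-1,q}(\Gamma)$ to $L^q(\Gamma)$ together with \cref{thm:LipschitzProperty}, and then bootstrap spatial regularity and conclude via Sobolev embedding. The existence and uniqueness part of your argument is sound and is essentially the paper's proof (which follows Taylor, Ch.~15.1).

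There is, however, one step that fails as written: in the bootstrap you apply the smoothing estimate $\|e^{tL}g\|_{W^{1,q}(\Gamma)}\leq Ct^{-\sigma}\|g\|_{L^q(\Gamma)}$ to the Duhamel integrand, but by your own first paragraph $F_w(u(s))$ only lies in $W^{-1,q}(\Gamma)$, not in $L^q(\Gamma)$. Gaining the full two derivatives from $W^{-1,q}(\Gamma)$ to $W^{1,q}(\Gamma)$ costs $t^{-1}$ (one factor $t^{-1/2}$ per derivative, cf.\ \eqref{Eq:NormEstimateOperatorNorm}), and $\int_0^t(t-s)^{-1}\,\d s$ diverges, so the Duhamel term cannot be placed in $W^{1,q}(\Gamma)$ by this argument. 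The paper avoids this by targeting only $W^{r,q}(\Gamma)$ with $r<1$, for which the singularity $t^{-(r+1)/2}$ is integrable; since $W^{r,q}(\Gamma)\hookrightarrow C^{0,r-2/q}(\Gamma)$ and $r$ may be taken arbitrarily close to $1$, this still yields $u\in C^0((0,\hat t];C^{0,\alpha}(\Gamma))$ for every $\alpha<1-2/q$, which is all the theorem claims. So your conclusion survives, but the intermediate assertion $u(t)\in W^{1,q}(\Gamma)$ is not justified by the estimates you invoke and should be weakened to $u(t)\in W^{r,q}(\Gamma)$ for all $r<1$.
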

\begin{proof}
 Following the exposition in \cite[Taylor, Ch.\,15.1]{Ref:Taylor}, the result is obtained by studying the integral equation
\begin{equation*}\label{Eq:IntegralEquationSolution}
  \Psi(u(t,\cdot)) = u(t,\cdot)
  \quad\text{ with }
  \Psi(u(t,\cdot)) = e^{tL}u_0 + \int_0^t e^{(t-s)L}F_w(u(s,\cdot))\, \d s
\end{equation*}
and treating it as a fixed point equation.
Analogously to the proof of \cite[Taylor, Ch.\,15.1, Prop.\,1.1]{Ref:Taylor}, the existence of a fixed point can be ascertained via the contraction mapping principle, given that for appropriate Banach spaces $X$ and $Y$
\begin{equation*}
\begin{aligned}
 e^{tL}&:X \to X &&\text{is a }C^0\text{-semigroup for }t \geq 0, \\
 F_w&:X \to Y &&\text{is locally Lipschitz,} \\
 e^{tL}&:Y \to X&&\text{with }\|e^{tL}\|_{\mathcal{L}(Y,X)} \leq Ct^{-\xi} \text{ for some } C>0,\,\xi <1\text{ and all } t \in (0, 1] .\hspace*{-3ex}
\end{aligned}
\end{equation*}
Above, $\|\cdot\|_{\mathcal L(Y,X)}$ denotes the operator norm of a linear operator from $Y$ to $X$.

For our choice $L = \varepsilon \Delta$, standard results for linear parabolic equations (see e.\,g.\ \cite[Ch.\,6.1]{Ref:TaylorFirstVolume}) guarantee
that the semigroup generated by $L$ is well-defined and strongly continuous. 
The appropriate Banach spaces $X$ and $Y$ are determined by the properties of $F_w$ and the obtainable bounds on the operator norm of $e^{tL}$.
For $X = W^{s,p}(\Gamma)$ and $Y=W^{r,q}(\Gamma)$ with $q \leq p$ and $r\leq s$, \cite[Ch.\,6.1]{Ref:TaylorFirstVolume} states
\begin{equation}\label{Eq:NormEstimateOperatorNorm}
\|e^{tL}\|_{\mathcal{L}(Y,X)} \leq C t^{ -\frac{1}{2}(\frac{1}{q}-\frac{1}{p})-\frac{1}{2}(s-r)}\,.
\end{equation}
In our case we pick $X = L^q(\Gamma)$, $Y = W^{-1,q}(\Gamma)$ so that $\xi = \frac12$ and the map $F_w:X\to Y$ is well-defined and locally Lipschitz continuous by \cref{thm:LipschitzProperty}.
For small enough $\hat t > 0$, which might still depend on the Lipschitz constant of $F_w$, $\xi$ and $\|u_0\|_{L^q(\Gamma)}$, we then obtain the existence and uniqueness of a solution $u \in C([0,\hat t]; X)$ to \eqref{Eq:ProblemAbstract}.

Higher spatial regularity can be derived from the estimate
\begin{equation}
 \|u(t,\cdot)\|_{V}  \leq \|e^{(t-t_0)L}\|_{\mathcal{L}(W,V)} \|u(t_0,\cdot)\|_{W} + \int_{t_0}^t \|e^{(t-s)L}\|_{\mathcal{L}(W,V)} \|F_w(u(s,\cdot))\|_{W}\, \d s
\end{equation}
with appropriate Banach function spaces $V$ and $W$ on $\Gamma$ and any $t_0\in[0,\hat t]$.
Choosing $W=Y=W^{-1,q}(\Gamma)$ and $V=W^{r,q}(\Gamma)$ with $r<1$ we obtain $\|e^{tL}\|_{\mathcal{L}(Y,X)} \leq Ct^{-\xi}$ with $\xi=\frac{1}{2}(r+1)<1$
so that $u\in C^0((0,\hat t];W^{r,q}(\Gamma))$.
In particular, by the continuous embedding $W^{r,q}(\Gamma)\subset C^{0,r-2/q}(\Gamma)$ \cite[Thm.\,8.2]{Ref:NezzaPalatucciValdinoci2012} we obtain $u\in C^0((0,\hat t];C^{0,r-2/q}(\Gamma))$ for any $r\in(\frac2q,1)$.
%
%
%
\end{proof}

\begin{remark}[Higher spatial regularity]
For smoother $\Gamma$ (and thus also smoother $w$, see \cref{rem:DarcyRegularity}), the spatial regularity of $u$ can be improved by consecutively using different pairs $V$, $W$ in the above argument.
For instance, one could study the action of $F_w$ and $e^{tL}$ on the following sequence of spaces,
\begin{multline*}
L^q(\Gamma) \stackrel{F_w}{\longrightarrow} H^{-1}(\Gamma) \stackrel{ e^{tL} }{\longrightarrow} H^{\frac{1}{2}} (\Gamma) \stackrel{F_w}{\longrightarrow} H^{-\frac{1}{2}} (\Gamma) \stackrel{e^{tL}}{\longrightarrow} H^{1}(\Gamma) \ldots \\ 
\ldots  \stackrel{ F_w }{\longrightarrow} L^2(\Gamma) \stackrel{e^{tL}}{\longrightarrow} H^{\frac{3}{2}}(\Gamma) \stackrel{F_w}{\longrightarrow} H^{\frac{1}{2}}(\Gamma)  \stackrel{e^{tL}}{\longrightarrow} H^{2}(\Gamma)\,,
\end{multline*}
which is for instance valid for twice differentiable $\Gamma$ and $w$ and would yield $u\in C^0((0,\hat t];H^2(\Gamma))$.
\end{remark}

\begin{proposition}[Global boundedness]\label{thm:globalBound}
Let $\Gamma$, $w$, $q$, $\alpha$, $u_0$ and $\hat t$ as in \cref{Thm:ShortTermExistence}, assume $u_0>0$,
and let $u$ be the weak solution to \eqref{eqn:Ezrin}-\eqref{eqn:EzrinBC} on $[0,\hat t]\times\Gamma$.
Then there exists some $M>0$ only depending on $\Gamma$, $w$, and $q$ with
$u(t,\cdot)\geq0$ and $\|u(t,\cdot)\|_{L^q(\Gamma)}\leq\max\{\|u_0\|_{L^q(\Gamma)},M\}$ for all $t\in(0,\hat t)$.
\end{proposition}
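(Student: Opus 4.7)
My plan is to derive two energy estimates by testing the PDE with suitable nonlinear functions of $u$, then make them rigorous using the enhanced spatial regularity (Hölder continuity) that the mild solution from \cref{Thm:ShortTermExistence} acquires for positive time, together with a standard mollification-and-pass-to-limit procedure. Non-negativity is obtained by testing with the negative part $(u)_-$; the $L^q$ bound follows from testing with $u^{q-1}$ and exploiting the superlinear desorption $d(w,u)\sim u^\zeta$ with $\zeta>1$.

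For non-negativity, I integrate the equation against $(u)_-=\min\{u,0\}$. Using that $\Gamma$ is closed (no boundary), integration by parts yields
\begin{equation*}
\tfrac12\tfrac{d}{dt}\int_\Gamma (u)_-^2\,\d\hd^2 = -\varepsilon\int_\Gamma|\nabla_\Gamma(u)_-|^2\,\d\hd^2 + \int_\Gamma (u)_-\,w\cdot\nabla_\Gamma(u)_-\,\d\hd^2 + \int_\Gamma (u)_-[a(u,1)-d(w,u)]\,\d\hd^2.
\end{equation*}
Under the natural convention $a(u,1)=0$ for $u\leq 0$ (since $u$ represents a concentration), the reaction contributes $-\int_\Gamma(C_1|w|+C_2)(u)_-^2\leq 0$. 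Young's inequality bounds the advection by $\tfrac{\varepsilon}{2}\int|\nabla_\Gamma(u)_-|^2+\tfrac{1}{2\varepsilon}\|w\|_{L^\infty(\Gamma)}^2\int(u)_-^2$, leaving a linear Gronwall inequality that, together with $(u_0)_-\equiv 0$, forces $(u)_-\equiv 0$.

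For the $L^q$ estimate I test with $u^{q-1}$, which is admissible thanks to Step 1 and the Hölder regularity. Integration by parts on $\Gamma$ yields
\begin{equation*}
\tfrac{1}{q}\tfrac{d}{dt}\|u\|_{L^q(\Gamma)}^q + \varepsilon(q-1)\int_\Gamma u^{q-2}|\nabla_\Gamma u|^2 = (q-1)\int_\Gamma u^{q-1}w\cdot\nabla_\Gamma u - \int_\Gamma u^{q-1}d(w,u) + \int_\Gamma u^{q-1}a(u,1).
\end{equation*}
The adsorption contribution is trivially bounded by $C|\Gamma|$ since $a(u,1)$ is supported on $\{u\leq 1\}$ and uniformly bounded there. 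For the advection, I apply Young's inequality to the product $(u^{(q-2)/2}|\nabla_\Gamma u|)\cdot (u^{q/2}|w|)$; this absorbs half of the diffusion and leaves an extra contribution proportional to $\|w\|_{L^\infty(\Gamma)}^2\int u^q$. Finally, for $u>1$, the explicit form of $d$ gives the lower bound $d(w,u)\geq\frac{C_2}{\zeta}u^\zeta$, producing the strongly dissipative term $-\frac{q C_2}{\zeta}\int_{\{u>1\}}u^{q+\zeta-1}$.

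To close the estimate, I exploit $\zeta>1$ via the elementary bound $\int_\Gamma u^q \leq K^q|\Gamma|+K^{1-\zeta}\int_{\{u>K\}}u^{q+\zeta-1}$, valid for any $K\geq 1$. Choosing $K$ large enough that $\|w\|_{L^\infty(\Gamma)}^2 K^{1-\zeta}$ is smaller than $C_2/\zeta$ (which is possible since $\zeta>1$), the $\|w\|_{L^\infty(\Gamma)}^2\int u^q$ growth is dominated by half of the desorption dissipation, so that
\begin{equation*}
\tfrac{d}{dt}\|u\|_{L^q(\Gamma)}^q \leq K_1 - K_2\|u\|_{L^q(\Gamma)}^q
\end{equation*}
for positive constants $K_1,K_2$ depending only on $\Gamma$, $w$, $q$ and the (fixed) model parameters. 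Standard ODE comparison then gives the claimed bound with $M=(K_1/K_2)^{1/q}$.

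The main technical obstacle is that the $L^\infty(\Gamma)$ bound on $w$ from \cref{thm:existenceDarcy} does not provide any control on $\dive_\Gamma w$ under the minimal (merely Lipschitz) regularity assumed on $\Gamma$, so one cannot simply integrate the advection by parts to move the gradient onto $w$. The crucial idea is therefore to leave the gradient on $u$ and split the advection via Young's inequality into a part absorbed by the diffusion and a part absorbed by the superlinear desorption dissipation.
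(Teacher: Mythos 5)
Your proposal is correct and, for the $L^q$ bound, essentially reproduces the paper's argument: test with $u^{q-1}$ (after mollifying in time), absorb the advection into the diffusion via Young's inequality because $\dive_\Gamma w$ is not controlled on a merely Lipschitz $\Gamma$, and let the superlinear desorption $d(w,u)\gtrsim C_2u^\zeta/\zeta$ supply the dissipation; the only difference is the final bookkeeping, where you split $\int_\Gamma u^q$ along the level set $\{u>K\}$ to dominate the growth term, while the paper converts everything into a scalar inequality $\tfrac1q\tfrac{\d}{\d t}\|u\|_{L^q}^q\leq c_1\|u\|_{L^q}^q+c_2\|u\|_{L^q}^{q-1}-c_3\|u\|_{L^q}^{q+\zeta-1}$ and exploits concavity of the right-hand side in $\|u\|_{L^q}^q$ to linearize about its zero $M^q$ before applying Gr\"onwall --- both give the same conclusion. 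Where you genuinely diverge is the nonnegativity step: you use the classical Stampacchia truncation with $(u)_-$, whereas the paper tracks the functional $\int_\Gamma u_\delta^{-2/\alpha}\,\d\hd^2$, shows it satisfies a linear Gr\"onwall inequality and hence stays finite, and derives a contradiction with the fact that this integral must blow up when a $C^{0,\alpha}$ function first touches zero. Your route is more elementary and only needs $u_0\geq0$, but it forces you to fix a convention for $a$ and $d$ on $\{u<0\}$ (for non-integer $\alpha$ the expression $u^\alpha$ is undefined there, and the sign of the resulting reaction contribution depends on that choice); the paper's route never evaluates the nonlinearities at negative arguments and actually shows $u$ remains strictly positive, at the price of using the strict positivity $u_0>0$. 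Both arguments share the time-mollification device and the same (glossed-over) need for spatial $H^1$-regularity of $u$ at positive times to justify the integrations by parts, so neither is more rigorous than the other on that point.
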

\begin{proof}
We would like to prove the above via a priori estimates obtained from testing the differential equation with different functions,
however, for this we need $\partial_t u$ to be sufficiently regular.
We achieve this by mollification in time.

Let $G:\R\to[0,\infty)$ be a smooth mollifier with support on $[-1,1]$ and mass $\int_\R G(t)\,\d t=1$.
For $\delta>0$ define $G_\delta(t)=G(t/\delta)/\delta$, and let $u_\delta(\cdot,x)=G_\delta\ast u(\cdot,x)$ for $x\in\Gamma$ be the mollification of the solution $u$ in time so that $u_\delta\in C^\infty((\delta,\hat t-\delta),C^{0,\alpha}(\Gamma))$.
Note that $u_\delta$ satisfies
\begin{equation*}
\partial_tu_\delta=\dive_\Gamma(\varepsilon\nabla_\Gamma u_\delta-u_\delta w)+G_\delta\ast(a(u,1)-d(w,u))\,.
\end{equation*}

Now assume that $u$ does not stay positive.
Since $u$ is continuous, there is a time
\begin{equation*}
t_\pm=\inf\left\{t\in(0,\hat t)\,\middle|\,\min u(t,\cdot)<0\right\}
\end{equation*}
when $u$ changes sign for the first time.
Take $\delta<\min\{t_\pm,\hat t-t_\pm\}/2$ small enough such that also $u_\delta$ changes sign at some time $t_\delta\in(2\delta,\hat t-2\delta)$.
Then necessarily, $\int_\Gamma u_\delta(t,x)^{-2/\alpha}\,\d\hd^2(x)\to\infty$ as $t\to t_\delta$,
since this integral must be infinite for any nonnegative $C^{0,\alpha}(\Gamma)$ function taking the value $0$ at some $x\in\Gamma$.
However,
\begin{align*}
\frac\alpha2\frac\d{\d t}\int_\Gamma u_\delta^{-\frac2\alpha}\,\d\hd^2
&=-\int_\Gamma u_\delta^{-\frac2\alpha-1}\partial_t u_\delta\,\d\hd^2\\
&=-\int_\Gamma u_\delta^{-\frac2\alpha-1}\left(\dive_\Gamma(\varepsilon\nabla_\Gamma u_\delta-u_\delta w)+G_\delta\ast(a(u,1)-d(w,u))\right)\,\d\hd^2\\
&=\int_\Gamma\left(1+\frac2\alpha\right)u_\delta^{-\frac2\alpha-2}\nabla_\Gamma u_\delta\cdot\left(u_\delta w-\varepsilon\nabla_\Gamma u_\delta\right)+u_\delta^{-\frac2\alpha-1}G_\delta\ast(d(w,u)-a(u,1))\,\d\hd^2\,.
\end{align*}
Now we can find some $C_\delta>0$ sufficiently large such that $d(w,u)-a(u,1)\leq C_\delta u$ for all times $t\in(\delta,\hat t-\delta)$, since $u$ is continuous on $[\delta,\hat t-\delta]$ and $w$ is bounded.
Thus we can estimate $G_\delta\ast(d(w,u)-a(u,1))\leq C_\delta u_\delta$ on $(2\delta,t_\delta]$.
Additionally using Young's inequality we arrive at
\begin{align*}
\frac\alpha2\frac\d{\d t}\int_\Gamma u_\delta^{-\frac2\alpha}\,\d\hd^2
&\leq\int_\Gamma\left(1+\frac2\alpha\right)u_\delta^{-\frac2\alpha-2}\left(u_\delta w\nabla_\Gamma u_\delta-\varepsilon|\nabla_\Gamma u_\delta|^2\right)+C_\delta u_\delta^{-\frac2\alpha}\,\d\hd^2\\
&\leq\int_\Gamma\left(1+\frac2\alpha\right)u_\delta^{-\frac2\alpha-2}\left(|u_\delta|^2\frac{\|w\|_{L^\infty(\Gamma)}^2}{4\varepsilon}+\varepsilon|\nabla_\Gamma u_\delta|^2-\varepsilon|\nabla_\Gamma u_\delta|^2\right)+C_\delta u_\delta^{-\frac2\alpha}\,\d\hd^2\\
&=\left(C_\delta+\frac{\|w\|_{L^\infty(\Gamma)}^2}{4\varepsilon}\left(1+\frac2\alpha\right)\right)\int_\Gamma u_\delta^{-\frac2\alpha}\,\d\hd^2\,.
\end{align*}
The above implies boundedness of $\int_\Omega u_\delta^{-2/\alpha}\,\d\hd^2$ 
for all times $t\in[2\delta,t_\delta]$,
thereby contradicting the blowup of the integral at $t_\delta$.
Consequently, $u$ does not change sign in $(0,\hat t)$.

The next a priori estimate tests equation \eqref{Eq:ProblemAbstract} with $u(t,\cdot)^{q-1}$, only again we mollify $u$ in time.
In more detail, we have
\begin{align*}
\frac1q\frac\d{\d t}\|u_\delta\|_{L^q(\Gamma)}^q
&=\int_\Gamma u_\delta^{q-1}\,\partial_t u_\delta\,\d\hd^2\\
&=\int_\Gamma u_\delta^{q-1}\left(\dive_\Gamma(\varepsilon\Delta_\Gamma u_\delta-u_\delta w)+G_\delta\ast(a(u,1)-d(w,u))\right)\,\d\hd^2\,.
\end{align*}
This time we estimate $G_\delta\ast(a(u,1)-d(w,u))\leq G_\delta\ast(C_3-C_2u^\zeta/\zeta)\leq C_3-C_2u_\delta^\zeta/\zeta$ by Jensen's inequality.
Again using Young's inequality we arrive at
\begin{align*}
\frac1q\frac\d{\d t}\|u_\delta\|_{L^q(\Gamma)}^q
&\leq\int_\Gamma u_\delta^{q-1}\dive_\Gamma(\varepsilon\Delta_\Gamma u_\delta-u_\delta w)+C_3u_\delta^{q-1}-\frac{C_2}\zeta u_\delta^{q-1+\zeta}\,\d\hd^2\\
&\leq\int_\Gamma(q-1)u_\delta^{q-2}\left(|u_\delta|^2\frac{\|w\|_{L^\infty(\Gamma)}^2}{4\varepsilon}\right)+C_3u_\delta^{q-1}-\frac{C_2}\zeta u_\delta^{q-1+\zeta}\,\d\hd^2\\
&=(q-1)\frac{\|w\|_{L^\infty(\Gamma)}^2}{4\varepsilon}\|u_\delta\|_{L^q(\Gamma)}^q+C_3\|u_\delta\|_{L^{q-1}(\Gamma)}^{q-1}-\frac{C_2}\zeta\|u_\delta\|_{L^{q-1+\zeta}(\Gamma)}^{q-1+\zeta}\\
&\leq c_1\|u_\delta\|_{L^q(\Gamma)}^q+c_2\|u_\delta\|_{L^q(\Gamma)}^{q-1}-c_3\|u_\delta\|_{L^q(\Gamma)}^{q+\zeta-1}
\end{align*}
for some positive constants $c_1,c_2,c_3$.
The right-hand side is concave in $\|u_\delta\|_{L^q(\Gamma)}^q$ and changes sign from positive to negative at some $M^q>0$.
Thus by linearizing the right-hand side in $\|u_\delta\|_{L^q(\Gamma)}^q$ about $M^q$ we can estimate
\begin{equation*}
\frac\d{\d t}\|u_\delta\|_{L^q(\Gamma)}^q
\leq-c_4\left(\|u_\delta\|_{L^q(\Gamma)}^q-M^q\right)
\end{equation*}
for some $c_4>0$.
Setting $e=\|u_\delta\|_{L^q(\Gamma)}^q-\max\{\|u_\delta(\delta,\cdot)\|_{L^q(\Gamma)}^q,M^q\}$, we thus arrive at $\frac\d{\d t}e\leq-c_4e$ with $e(\delta)\leq0$,
which by Gr\"onwall's inequality implies $e(t)\leq0$ for all $t\in[\delta,\hat t-\delta]$.
Letting now $\delta\to0$ we arrive at the desired estimate for $\|u\|_{L^q(\Gamma)}$.
\end{proof}

A direct consequence is the long-term existence of solutions to the equation.

\begin{corollary}[Long-term existence]
Let $\Gamma$, $w$, $q$, $\alpha$, and $u_0$ as in \cref{thm:globalBound},
then for any $T>0$ there exists a unique nonnegative weak solution $u\in C^0([0,T]; L^q(\Omega))\cap C^0((0,T]; C^{0,\alpha}(\Gamma))$ to \eqref{eqn:Ezrin}-\eqref{eqn:EzrinBC}
with $\|u(t,\cdot)\|_{L^q(\Gamma)}$ bounded uniformly in $t$.
\end{corollary}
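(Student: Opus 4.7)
The plan is the standard continuation argument: short-term existence gives a solution on some $[0,\hat t]$, and the uniform a priori bound from \cref{thm:globalBound} lets us restart the short-term existence at any time without shrinking the step, so we can cover any interval $[0,T]$ in finitely many steps.

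First I would define the maximal existence time
\begin{equation*}
T^\ast=\sup\left\{\tau>0\,\middle|\,\text{a unique weak solution to \eqref{eqn:Ezrin}-\eqref{eqn:EzrinBC} exists on }[0,\tau]\text{ in the class of \cref{Thm:ShortTermExistence}}\right\}\,.
\end{equation*}
\Cref{Thm:ShortTermExistence} immediately gives $T^\ast>0$. Suppose for contradiction that $T^\ast<\infty$. The key observation is that the short-term existence time $\hat t$ produced by \cref{Thm:ShortTermExistence} depends on the initial datum only through its $L^q(\Gamma)$-norm (and through $\Gamma$ and $w$, which are fixed). By \cref{thm:globalBound}, as long as the solution exists, one has $u(t,\cdot)\geq0$ and
\begin{equation*}
\|u(t,\cdot)\|_{L^q(\Gamma)}\leq M^\ast:=\max\{\|u_0\|_{L^q(\Gamma)},M\}\,,
\end{equation*}
uniformly in $t\in(0,T^\ast)$.

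Next I would apply \cref{Thm:ShortTermExistence} to Cauchy data $u(t_0,\cdot)$ for any $t_0\in(0,T^\ast)$: it yields a unique weak solution on $[t_0,t_0+\tau]$ with $\tau>0$ depending only on $M^\ast$ (not on $t_0$). Choosing $t_0>T^\ast-\tau/2$ and concatenating with the solution on $[0,t_0]$ (gluing is well-defined since both pieces lie in $C^0([\cdot];L^q(\Gamma))$ and coincide with their fixed-point representation) produces a solution on $[0,t_0+\tau]$ with $t_0+\tau>T^\ast$, contradicting the definition of $T^\ast$. Hence $T^\ast=\infty$, and restricting to $[0,T]$ proves existence for arbitrary $T>0$.

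Uniqueness on $[0,T]$ follows from the contraction-mapping uniqueness on each short subinterval: if $u_1,u_2$ were two solutions, the set $\{t\in[0,T]:u_1\equiv u_2\text{ on }[0,t]\}$ is nonempty, closed, and open by \cref{Thm:ShortTermExistence}, hence equal to $[0,T]$. The regularity statement $u\in C^0((0,T];C^{0,\alpha}(\Gamma))$ and nonnegativity are inherited directly from the short-term pieces via \cref{Thm:ShortTermExistence} and \cref{thm:globalBound}. No real obstacle arises, the only delicate point being to verify that the short-term existence time in \cref{Thm:ShortTermExistence} can indeed be chosen to depend only on $\|u_0\|_{L^q(\Gamma)}$; this is transparent from the contraction argument in that proof, where the Lipschitz constant of $F_w$ on balls of $L^q(\Gamma)$ and the semigroup bound \eqref{Eq:NormEstimateOperatorNorm} are the only ingredients determining the admissible step size.
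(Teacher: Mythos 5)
Your proposal is correct and follows essentially the same route as the paper: iterate the short-term existence result, using the uniform $L^q$-bound and nonnegativity from \cref{thm:globalBound} to guarantee a step size that does not degenerate, until the interval $[0,T]$ is covered. Your phrasing via a maximal existence time and a connectedness argument for uniqueness is just a more formal packaging of the paper's direct finite iteration.
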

\begin{proof}
The short-term solution from \cref{Thm:ShortTermExistence} exists up to some time $\hat t$ depending on $\|u_0\|_{L^q(\Gamma)}$.
By \cref{thm:globalBound} this solution is nonnegative and satisfies a uniform $L^q$-bound.
Thus, by again appealing to \cref{Thm:ShortTermExistence} for the solution of \eqref{Eq:ProblemAbstract} with initial condition $u(\hat t,\cdot)$
we can continue the solution to time $2\hat t$.
Again, \cref{thm:globalBound} yields the nonnegativity and the same $L^q$-bound.
After a finite number of repetitions of this argument we have continued the solution up to time $T$.
\end{proof}

As already mentioned previously, our system \eqref{eqn:Ezrin}-\eqref{eqn:EzrinBC} represents an advective Allen--Cahn type equation.
Such a system has already been analysed in \cite{Ref:LiuBertozziKolokolnikov2012}, however, for a simpler nonlinearity and under much higher regularity conditions.
In particular, the velocity field $w$ of \cite{Ref:LiuBertozziKolokolnikov2012} is differentiable,
and the domain is smooth enough (in their case it is an open set rather than an embedded lower-dimensional manifold) to allow classical solutions.
In more detail, the authors use the Galerkin method to show existence of solutions to the advective Cahn--Hilliard and Allen--Cahn equation with an odd polynomial as nonlinearity.
The odd polynomial allows a simpler a priori estimate than in our case where the nonlinearity behaves qualitatively different on the positive and the negative real line,
which is why in our analysis we showed nonnegativity of the solution before an $L^q$ estimate.
The authors also consider the nonlocal advective Allen--Cahn equation with mass conservation.
Here they apply the same semigroup approach as we do, also putting the advective term into the nonlinearity.
Again, the analysis is simplified by the higher regularity of domain, initial value, and velocity field.
Global boundedness and nonnegativity of the solution then follow from a maximum principle exploiting the smoothness of the solution and the velocity field.
(In fact, the authors use an Allen--Cahn well at $0$ and at $1$ and claim that the solution $u$ thus stays within $[0,1]$.
However, this is only true for velocity fields with low enough compression.
In general the argument can only provide a bound depending on the size of $\dive w$.)
In contrast, our argument makes use of as little regularity requirements as possible
(note that with little modification it even works for $w\in L^r$ with $r<\infty$ large enough);
in particular, function spaces with more than one weak derivative do not even make sense on our domain $\Gamma$.

The authors of \cite{Ref:LiuBertozziKolokolnikov2012} also study (in one spatial dimension) how droplets (regions with high values of $u$) are affected by the advection term.
In the smooth case they show that droplets cannot be broken up by an expansive flow as long as the expansion rate does not increase towards the droplet centre.
However, for general flow fields droplets can indeed be broken into smaller ones as their numerical experiments show.
In our setting we expect the flow field $w$ to be strongly expansive at the cell front and contractive in the back, thus assisting in the accumulation of Ezrin at the back of the cell.

\subsection{Phase separation}
Without the transport term $\dive_\Gamma(wu)$, \eqref{eqn:Ezrin} is a classical Allen--Cahn type equation, that is, the $L^2$-gradient flow of the energy
\begin{multline*}
E[u]=\int_\Gamma\frac\varepsilon2|\nabla u|^2+W_{w,1}(u)\,\d x\\
\text{with }
W_{w,1}(u)=\begin{cases}
\tfrac{C_1|w|+C_2}2(u^2-1)-C_3\left(\tfrac{u^{\alpha+1}-1}{\alpha+1}-\tfrac{u^{\alpha+2}-1}{\alpha+2}\right)&\text{if }u\leq1,\\
\tfrac{C_1|w|+C_2}\zeta\left(\tfrac{u^{\zeta+1}-1}{\zeta+1}+(\zeta-1)(u-1)\right)&\text{else.}
\end{cases}
\end{multline*}
It is well-known that this gradient flow results in a separation of the domain into different phases (a phase rich in Ezrin and one without) which correspond to the stable steady states of the corresponding reaction equation
\begin{equation}\label{eqn:reaction}
\partial_tu=-W_{w,1}'(u)=a(u,1)-d(w,u)=\begin{cases}C_3u^\alpha(1-u)-(C_1|w|+C_2)u&\text{if }u\leq1,\\-(C_1|w|+C_2)\frac{u^\zeta-1}\zeta+1&\text{else.}\end{cases}
\end{equation}
In this paragraph we briefly describe the involved phases.

\begin{proposition}[Ezrin phases]
In the case $\alpha=1$, a transcritical bifurcation happens at $|w|=\bar w=\frac{C_3-C_2}{C_1}$:
\begin{itemize}
\item If $|w|\geq\bar w$, \eqref{eqn:reaction} has the only nonnegative steady state $u=0$, which is stable.
\item If $|w|<\bar w$, \eqref{eqn:reaction} has two nonnegative steady states, an instable one at $u=0$ and a stable one at $u=\frac{C_3-C_2-C_1|w|}{C_3}$.
\end{itemize}
In the case $\alpha>1$, a saddle node bifurcation occurs at $|w|=\bar w=\frac{C_3}{C_1}\frac{(1-1/\alpha)^{\alpha-1}}\alpha-\frac{C_2}{C_1}$:
\begin{itemize}
\item If $|w|\geq\bar w$, \eqref{eqn:reaction} has the only nonnegative steady state $u=0$, which is stable.
\item If $|w|<\bar w$, \eqref{eqn:reaction} has exactly two stable nonnegative steady states, one at $u=0$ and one in $[\frac{\alpha-1}\alpha,1-\frac{C_2}{C_3}]$.
\end{itemize}
\end{proposition}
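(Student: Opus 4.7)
\medskip
\noindent\textbf{Proof proposal.}
The plan is to reduce everything to the zero-finding and sign analysis of the scalar right-hand side
\begin{equation*}
g(u) := a(u,1) - d(w,u) = -W_{w,1}'(u),
\end{equation*}
since positivity of $u$ is preserved by the ODE and stability at a simple zero is governed by $\operatorname{sign} g'$. The first step is to dispose of the range $u\geq 1$: for $u\geq 1$ we have $a(u,1)=0$ and $d(w,u)\geq (C_1|w|+C_2)\geq C_2>0$, so $g(u)<0$ there. In particular no nonnegative steady state lies in $[1,\infty)$, and any steady state in $(0,1)$ must solve the factored equation
\begin{equation*}
g(u) = u\bigl[C_3 u^{\alpha-1}(1-u) - (C_1|w|+C_2)\bigr] = 0 .
\end{equation*}

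Next, I will study the auxiliary function $h(u)=C_3 u^{\alpha-1}(1-u)$ on $[0,1]$. For $\alpha=1$ this is the linear map $u\mapsto C_3(1-u)$, so $h(u)=C_1|w|+C_2$ has a unique root $u^\star=(C_3-C_2-C_1|w|)/C_3$, which lies in $(0,1]$ precisely when $|w|<\bar w=(C_3-C_2)/C_1$; this is the announced transcritical bifurcation. For $\alpha>1$, $h$ vanishes at the endpoints, is strictly unimodal, and attains its maximum at $u^\star=(\alpha-1)/\alpha$ with value $h(u^\star)=C_3(1-1/\alpha)^{\alpha-1}/\alpha$. Therefore the horizontal line at height $C_1|w|+C_2$ meets the graph of $h$ in two points $u_1<u^\star<u_2$ exactly when $C_1|w|+C_2<h(u^\star)$, i.e.\ when $|w|<\bar w$ with $\bar w$ as in the statement, yielding the saddle-node bifurcation.

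Stability is then read off from $g$. In the $\alpha=1$ case $g'(0)=C_3-C_2-C_1|w|$ changes sign across $|w|=\bar w$, giving the exchange of stability between $u=0$ and the nontrivial root. In the $\alpha>1$ case $g(u)\sim -(C_1|w|+C_2)u$ near $0$, so $u=0$ is always linearly stable; the sign pattern of $g$ on $(0,\infty)$, namely negative on $(0,u_1)$, positive on $(u_1,u_2)$ and negative on $(u_2,\infty)$ (using Step 1 beyond $u=1$), identifies $u_1$ as unstable and $u_2$ as stable. For the localisation of $u_2$, the lower bound $u_2\geq u^\star=(\alpha-1)/\alpha$ is automatic from $u_2$ lying on the decreasing branch of $h$. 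For the upper bound I would evaluate $h$ at $1-C_2/C_3$:
\begin{equation*}
h\bigl(1-\tfrac{C_2}{C_3}\bigr) = C_2\bigl(1-\tfrac{C_2}{C_3}\bigr)^{\alpha-1} \leq C_2 \leq C_1|w|+C_2 = h(u_2),
\end{equation*}
and since $h$ is strictly decreasing on $[u^\star,1]$ this implies $u_2\leq 1-C_2/C_3$, provided $1-C_2/C_3\geq u^\star$, which is guaranteed by $\bar w>0$ (i.e.\ by the very assumption defining the bifurcated regime).

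The main obstacle I anticipate is not any single step but the bookkeeping at the boundary cases: verifying carefully that the upper bound $1-C_2/C_3$ always sits on the decreasing branch of $h$ whenever the regime $|w|<\bar w$ is non-empty, and handling the degenerate case $|w|=\bar w$ where the two roots coalesce. Both are elementary inequalities among $C_1,C_2,C_3$ and $\alpha$, and no analytic machinery beyond monotonicity and a linearisation of $g$ at each steady state is needed.
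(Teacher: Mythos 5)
Your proposal is correct and follows essentially the same route as the paper: an elementary phase-line analysis of the scalar reaction term, locating its zeros and reading off stability from the sign pattern, with the bracketing of the nontrivial root obtained by evaluating the nonlinearity at $\frac{\alpha-1}{\alpha}$ and $1-\frac{C_2}{C_3}$. Your factorization $g(u)=u\bigl[C_3u^{\alpha-1}(1-u)-(C_1|w|+C_2)\bigr]$ and the unimodality of $h$ merely make explicit what the paper declares ``obvious from the shape of $-W_{w,1}'$'', and your check that $1-C_2/C_3\geq 1-1/\alpha$ follows from $\bar w>0$ is a correct (and welcome) detail the paper leaves implicit.
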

\begin{proof}
The case of $\alpha=1$ follows directly from noting that for $|w|\geq\bar w$ the flow field $-W_{w,1}'$ is monotonically decreasing and has a single zero in $0$,
while for $|w|<\bar w$ the two zeros of $-W_{w,1}$ are given by $u=0$ (around which $-W_{w,1}'$ is increasing) and $u=\frac{C_3-C_2-C_1|w|}{C_3}$ (around which $-W_{w,1}'$ is decreasing).

In the case $\alpha>1$ we have $-W_{w,1}'(0)=0$ and $-W_{w,1}''(0)<0$ so that $u=0$ is always a stable steady state.
From the shape of $-W_{w,1}'$ (cf.\ \cref{fig:doubleWell2}) it is then obvious that, depending on $|w|$, this is either the only zero of $W_{w,1}'$
or there are two more (thus the smaller one must be instable and the larger stable).
That the larger one lies within $[\frac{\alpha-1}\alpha,1-\frac{C_2}{C_3}]$ can easily be shown by checking $-W_{w,1}'(1-\frac{C_2}{C_3})<0$ and $-W_{w,1}'(\frac{\alpha-1}\alpha)>0$ for $|w|<\bar w$.
That $w=\bar w$ is critical can be seen from noticing that $-W_{\bar w,1}'$ has a maximum in $u=\bar u=1-\frac1\alpha$, at which $-W_{\bar w,1}'(\bar u)=0$.
\end{proof}

Thus, the expected final configuration of the evolving system is as follows:
In vicinity of the actin brushes the flow $w$ is strong so that there the phase $u=0$ is the only stable state of the reaction part in \eqref{eqn:Ezrin}.
Thus, all throughout that region $u$ quickly decreases to zero, and the transport term $\dive_\Gamma(wu)$ has little influence in that region due to $u\approx0$ so that after a while the equation indeed almost behaves like the gradient flow of $E$.
Further away from the actin brushes, where the flow $w$ becomes sufficiently weak,
the Ezrin-rich phase with $u>0$ suddenly becomes stable.
Here we have to distinguish between the behaviour for $\alpha=1$ and $\alpha>1$.
\begin{itemize}
\item
For $\alpha=1$, the stable state of the Ezrin-reaction is directly coupled to the flow by $u=\max\{0,\frac{C_3-C_2-C_1|w|}{C_3}\}$.
In particular, towards the back of the cell the flow will be so weak that $u$ will approximately take the value $u\approx1-\frac{C_2}{C_3}$.
Again the transport term $\dive_\Gamma(wu)$ becomes negligible in that region due to the smallness of $w$ and $\dive_\Gamma w$ and the constancy of $u$
so that after a while the equation roughly behaves like a gradient flow of $E$.
Now if the transition from high boundary velocity $|w|$ at the cell front to low velocity in the back is gradual,
then also $u$ will change gradually along the cell membrane from almost $0$ to almost $1-\frac{C_2}{C_3}$.
If on the other hand the flow velocity changes rather abruptly (for instance in the wake of the nucleus),
then the transition width from Ezrin-low to Ezrin-rich phase is governed by the diffusion $\epsilon\Delta_\Gamma u$.
In that case it is known since the work of Modica and Mortola \cite{Ref:ModicaMortola1977} that this width roughly behaves like
\begin{equation*}
\sqrt{\tfrac{\varepsilon}{W_{w,1}(u)-W_{w,1}(1-C_2/C_3)}}(1-\tfrac{C_2}{C_3})\sim\sqrt{\tfrac\varepsilon{C_3}}(1-\tfrac{C_2}{C_3})^{-3/2}
\end{equation*}
for $u$ between the two phases.
\item
For $\alpha>1$ there will be no gradual change between the Ezrin-rich and Ezrin-low phase since there is a concentration gap of $\frac{\alpha-1}\alpha$ between both phases so that $u$ will more or less jump from $u\approx0$ to $u>\frac{\alpha-1}\alpha$.
As in the case $\alpha=1$, this jump will be regularized by the diffusion and happen across a width of $\sqrt{\tfrac{\varepsilon}{W_{w,1}(u)-W_{w,1}(1-1/\alpha)}}(1-\tfrac1\alpha)$.
\end{itemize}


\section{Comparison and discussion of numerical and biological experiments} \label{sec:Experiments}

Verification of the model is nontrivial since inside a germ cell the model parameters can only be influenced indirectly, cannot be properly quantified, and typically also govern other processes that can completely alter the cell behaviour.
In this section we vary parameters in numerical simulations of the model and compare the results to biological experiments in which the parameters are subjected to qualitatively similar changes.
For simplicity (and since a quantitative comparison to biological experiments is ruled out anyway) we discretize and simulate the equations in two rather than three spatial dimensions.
Again we will drop the hats on all variables.

\subsection{Finite Element Discretization}\label{sec:Discretization}

The weak form of \eqref{eqn:Darcy1}-\eqref{eqn:DarcyBC} (we will replace \eqref{eqn:Darcy2} by the equivalent \eqref{eqn:DarcyP}) as well as \eqref{eqn:Ezrin} consists in finding $p\in H^1(\Omega)$, $w\in L^2(\Omega;\R^2)$ and $u\in C([0,T];H^1(\Gamma))\cap C^1([0,T];H^{-1}(\Gamma))$ such that
\begin{align*}
\int_\Omega \nabla p\cdot\nabla\psi \,\d x &= \int_\Omega f\cdot\nabla\psi \,\d x
&&\forall\psi\in H^1(\Omega)\,,\\
\int_\Omega w\cdot q \,\d x &= \int_\Omega p\,\dive q + f\cdot q \,\d x
&&\forall q\in L^2(\Omega)\,,\\
\int_\Gamma \partial_tu \, \varphi \,\d x &= \int_\Gamma uw\cdot\nabla_\Gamma\varphi - \varepsilon\nabla_\Gamma u\cdot\nabla_\Gamma\varphi - d(w,u)\varphi + a(u,1)\varphi \,\d x
&&\forall \varphi\in H^1(\Gamma),t\in[0,T]\,.
\end{align*}
Note that here we use the standard spaces for the Ezrin equation on $\Gamma$ rather than the minimum regularity spaces from the previous section, since in our numerical experiments the domains and thus also solutions will be smooth.
We will use semi-implicit finite differences in time, using a step length $\Delta t = \frac{T}{M}$ for some $M\in\N$, and finite elements in space.
In order to allow a potential generalization of the model from Darcy to viscous Darcy or Stokes--Brinkmann flow we use Taylor--Hood elements.
Thus, given a triangulation $\mathcal T_h$ of a polygonal approximation $\Omega_h$ of our domain and a discretization $\mathcal S_h$ of $\Gamma_h$ into line segments we use the finite element spaces
\begin{align*}
\mathbf W_h&=\left\{w\in C(\Omega_h;\R^2)\,\middle|\,w|_T\in\mathcal P_2\text{ for all }T\in\mathcal T_h\right\}\,,\\
\mathbf P_h&=\left\{p\in C(\Omega_h)\,\middle|\,p|_T\in\mathcal P_1\text{ for all }T\in\mathcal T_h\right\}\,,\\
\mathbf U_h&=\left\{w\in C(\Gamma_h)\,\middle|\,w|_T\in\mathcal P_2\text{ for all }S\in\mathcal S_h\right\}\,,
\end{align*}
where $\mathcal P_k$ is the space of polynomials of degree no larger than $k$.
In summary, the discretized version of the equations is to find $p_h\in\mathbf P_h$, $w_h\in\mathbf W_h$ and $u_h^0,\ldots,u_h^M\in\mathbf U_h$ such that
\begin{align*}
\int_{\Omega_h} \nabla p_h^{k+1}\cdot\nabla\psi_h\, \d x &= \int_{\Omega_h} f\cdot\nabla\psi_h \, \d x
&&\forall\psi_h\in\mathbf P_h\,,\\
\int_{\Omega_h} w_h\cdot q_h \, \d x &= \int_{\Omega_h} p_h\dive_\Gamma\, q_h + f\cdot q_h \, \d x
&&\forall q_h\in\mathbf W_h\,,\\
\int_{\Gamma_h} \frac{u_h^{k+1}-u_h^k}{\Delta t}\varphi_h \, \d x &= \int_{\Gamma_h} u_h^{k+1}w_h\cdot\nabla_{\Gamma_h}\varphi_h - \varepsilon\nabla_{\Gamma_h} u_h^{k+1}\cdot\nabla_{\Gamma_h}\varphi_h\\
&\qquad- d(w_h,u_h^{k})\varphi_h + a(u_h^{k},1)\varphi_h \, \d x
&&\forall\varphi_h\in\mathbf U_h,k=0,\ldots,M-1\,.
\end{align*} 
In principle one might let $f$ also depend on time in which case $p_h$ and $w_h$ would become the corresponding time-dependent quasistationary solutions.
Since a higher spatial resolution for $u$ was desired, we chose to work with a finer polygonal approximation $\Gamma_h$ of $\Gamma$ than $\partial\Omega_h$ and used nearest neighbour interpolation to approximate $w|_{\Gamma_h}$ by $w|_{\partial\Omega_h}$.
Denote by $M,S,B$ the mass, stiffness and mixed mass-stiffness matrix and by $M^\omega$ the weighted mass matrix with weight function $\omega$.
Using capital letters for the vectors of degrees of freedom in $p_h$, $w_h$, and $u_h^k$, the discrete system can be rewritten as
\begin{align*}
SP_h &= B^TF\,, \\
MW_h &= -BP_h+MF\,, \\
\left(\frac{1}{\Delta t}M-A^{w_h}+\varepsilon S\right)U_h^{k+1} &= \frac{1}{\Delta t}MU_h^k - M^{d(w_h,u_h^k)}\mathbf{1} + M^{a(u_h^k,1)}\mathbf{1}\,,
\end{align*}
where $\mathbf 1$ denotes the vector representing the constant function $1$.
Here, the matrix $A^\omega$ is defined as $A_{ij}^\omega = \int_{\Gamma_h} \varphi_j\omega\cdot\nabla_{\Gamma_h}\varphi_i\,\d x$ for the basis functions $\varphi_i$ of $\mathbf U_h$.
For a fixed $\varepsilon$ and small enough time step $\Delta t$, the matrix $\left( \frac{1}{\Delta t}M-A+\varepsilon S \right)$ is positive definite so that the last equation can be solved.
Likewise, $M$ is invertible and $S$ is invertible on the subspace $\{p\in\mathbf P_h\,|\,\int_{\Omega_h}p\,\d x=0\}$ so that the first two equations can be solved as well.

\subsection{Experimental study} \label{sec:ExperimentalResults}

We implemented the scheme from the previous section in MATLAB$^{\tiny{\textcircled{c}}}$
and performed simulations on an elliptical cell $\Omega$ with a circular impermeable nucleus $N\subset\Omega$.
Our chosen default parameters are provided in \cref{tab:parameters}.
In dimensional variables these correspond to the values from \cref{sec:Nondimensionalisation} as well as the following.

\bigskip

\begin{tabular}{ll}
width and length of the cell&12\,$\mu$m and 18\,$\mu$m\\
diameter of the nucleus&6\,$\mu$m\\
cytoplasmic Ezrin concentration&$10^{-18}$mol/$\mu$m$^3$\\
membraneous Ezrin concentration&$10^{-22}$mol/$\mu$m$^2$\\
$(\gamma,\beta_1,\beta_2)$&$(3\cdot 10^{38},3.3,0.0007)$
\end{tabular}

\bigskip

\begin{table}
\centering
\begin{tabular}{ll}
parameter&value\\\hline
$T$&1\\
$\Omega$&$\{x\in\R^2\,|\,(x_1/1.2)^2+(x_2/0.8)^2\leq1\}$\\
nucleus $N$&$\{x\in\R^2\,|\,|x-(0.2 0)|<0.4\}$\\
$|f|$&$20$\\
$\varepsilon$&$0.002$\\
$(\alpha,\zeta)$&$(1,2)$\\
$(C_1,C_2,C_3)$&$(50,0.1,5)$
\end{tabular}
\caption{Default parameter values for the numerical simulation of the cytoplasmic flow and Ezrin concentration in a primordial germ cell.}
\label{tab:parameters}
\end{table}

The force $f$ which represents the actin brushes is a smoothed point force, $f=\vec f G$ with a smooth nonnegative, compactly supported kernel $G$ of mass $1$, located close to the cell front and pointing forward.
Its strength is chosen as $|\vec f|=20$ ($20$ pN/$\mu$m$^2$ in physical dimensions) so as to achieve an average cytoplasmic velocity corresponding to $0.1\,\mu$m/s, which is the value estimated from a few biological microscopy videos.
The resulting simulated intracellular flow is shown in \cref{fig:IntracellularFlowSimulation1}. 

\begin{figure}
	\centering
	\begin{tikzpicture}
		\node at (0,0) {\includegraphics[width=1\textwidth]{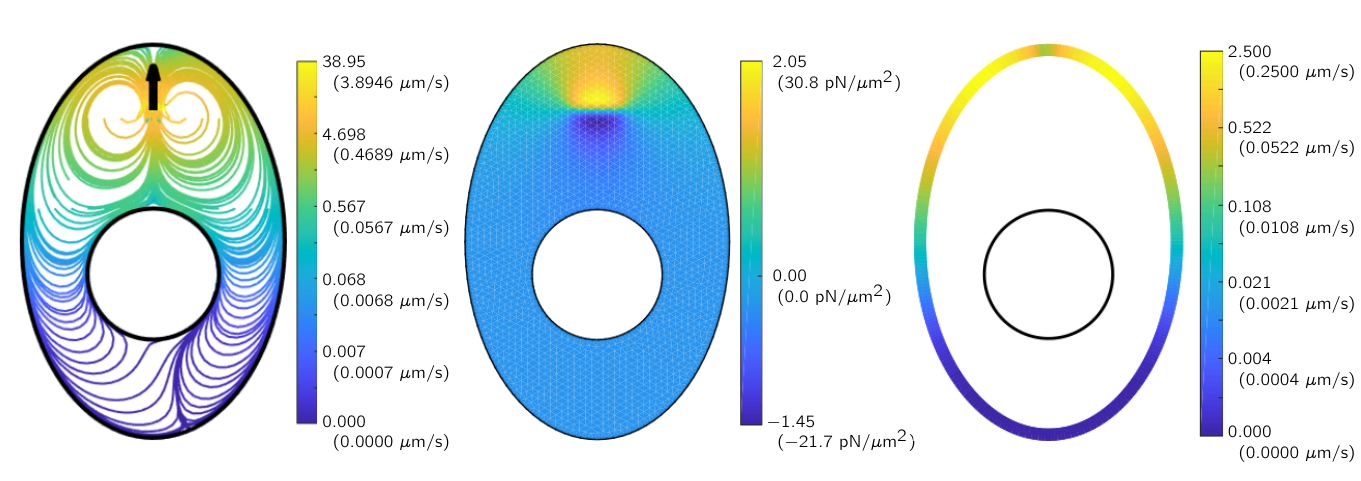}};
		\put(-154,57) {\small{$\mathrm{front}$}};
		\put(-154,-64) {\small{$\mathrm{back}$}};
		\put(-33,57) {\small{$\mathrm{front}$}};
		\put(-33,-64) {\small{$\mathrm{back}$}};
		\put(91,57) {\small{$\mathrm{front}$}};
		\put(91,-64) {\small{$\mathrm{back}$}};
	\end{tikzpicture}
\caption{Intracellular flow resulting from \eqref{eqn:Darcy1}-\eqref{eqn:DarcyBC}.
Left: Streamlines, colour-coded according to velocity (logarithmic scale; location and direction of the actin brush-induced forces are indicated by the black arrow).
Middle: Intracellular pressure distribution.
Right: Magnitude of boundary velocity (logarithmic scale).
}
\label{fig:IntracellularFlowSimulation1}
\end{figure}

As for the numerical simulation of \eqref{eqn:Ezrin}-\eqref{eqn:EzrinBC},
we typically started from a random Ezrin distribution on $\Gamma$ at time $0$
(that is, $u(0,x)$ for each grid point $x$ was sampled according to the uniform distribution on $[0,1]$).
A stationary state was then typically reached towards the end of the simulated time interval (see \cref{fig:timeEvolution}).
Note that even though exactly at the cell front there is no shear stress, the diffusion is strong enough to remove Ezrin there as well.

\begin{figure}
	\centering
	\begin{tikzpicture}
	\node at (0,0) {\includegraphics[width=1.1\textwidth]{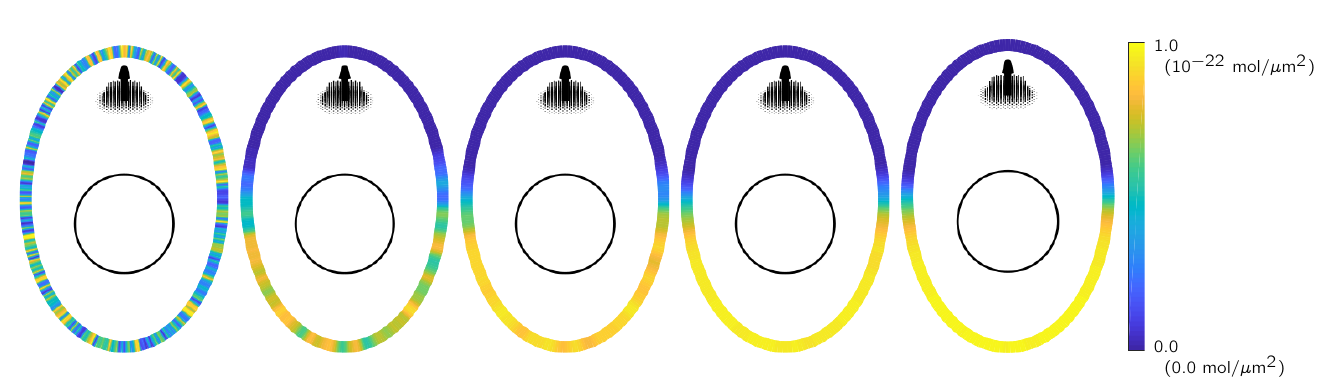}};
	\put(-179,-58) {\small{$\hat{t}=0$}} 
	\put(-116,-58) {\small{$\hat{t}=0.25$}} 
	\put(-47,-58) {\small{$\hat{t}=0.5$}} 
	\put(20,-58) {\small{$\hat{t}=0.75$}} 
	\put(93,-58) {\small{$\hat{t}=1$}} 
	\put(-183,-68) {\small{$(0 \, \mathrm{sec.})$}}
	\put(-120,-68) {\small{$(37.5 \, \mathrm{sec.})$}}
	\put(-50,-68) {\small{$(75 \, \mathrm{sec.})$}}
	\put(15,-68) {\small{$(112.5 \, \mathrm{sec.})$}}
	\put(85,-68) {\small{$(150 \, \mathrm{sec.})$}}
	\end{tikzpicture}
\caption{Time evolution of the active Ezrin distribution during actin-brush-induced polarization. Towards the end of the simulation a stationary state is reached. Location and direction of the actin brush-induced forces are indicated in black.
}
\label{fig:timeEvolution}
\end{figure}

In the remainder of this section we try to compare the model behaviour to the observed biological cell behaviour.
To this end we varied those model parameters in our simulations which we expect to strongly determine the resulting Ezrin concentration and which could be influenced also experimentally.
Subsequently we tried to reproduce these model changes in biological experiments so as to compare the biological result to our simulation.

To prepare the experiments, the zebrafish embryos were injected at 1-cell stage into the yolk with 1nl of the sense mRNA and translation blocking morpholino antisense oligonucleotide against chemoattractant Cxcl12a (MOs; GeneTools).
Messenger RNA was synthesized using the mMessageMachine kit (Ambion).
To express proteins preferentially in germ cells the corresponding coding region was cloned upstream of the 3’UTR of nanos3 gene
\cite[K\"oprunner, Thisse et al., 2001]{Ref:KoeprunnerEtAl2001}. 
Embryos were incubated at 25 degrees prior to imaging.
The representative biological images were acquired with the VisiView software using a widefield fluorescence microscope (Carl Zeiss Axio imager Z1 with a Retiga R6 camera (experiments 1-3)). 
Confocal microscopy imaging was done using a Carl Zeiss LSM 710 microscope and time lapse movies were taken using the Zen software (experiment 4).
Photoactivation was performed using a Carl Zeiss LSM 710 confocal microscope in the bleaching menu of the Zen software.
At the beginning, $5$ frames (every $7.75$ seconds) were imaged before the first round of photoactivation with a $456$nm laser ($120-240$ iterations, 100\,\% laser power). 
Photoactivation was performed near the cell boundaries, opposing existing actin brushes in a circular region of interest (diameter of $5\mu$m) and repeated every $4$ frames.

In the following we describe the single experiments; the corresponding numerical and biological results are shown in \cref{fig:Experiments}.
\begin{enumerate}
\item
\emph{Changes in total Ezrin concentration.}
The total Ezrin concentration enters in the adsorption strength $C_3$ at the membrane. Its increase will thus increase the amount of active Ezrin on the membrane.
Biologically, an Ezrin overexpression (experiment 1(a)) was achieved by injection of Ypet.Ezrin.nos3'UTR RNA construct ($250$\,pg). A comparable decrease of the Ezrin concentration is more difficult to achieve, thus this was only performed numerically.
\Cref{fig:Experiments}, 1(a) shows the numerical result for an increased value of the default $C_3$ by factor $5$ alongside with a microscopy image of the biological experiment.
While active Ezrin still accumulates in the back, its concentration is elevated on a much larger portion of the cell membrane than usual.
\Cref{fig:Experiments}, 1(b) shows the numerical result for a decreased value of $C_3$ by factor $1/10$, resulting in an almost vanishing Ezrin concentration on the membrane.
\item
\emph{Changes in actin activity.}
Numerically, an increased (or decreased) actin activity can be achieved by an in- (or de-)creased force strength $|\vec f|$, resulting in a higher (or lower) intracellular velocity (in experiment 2(a) we use a $5$-fold increase, in experiment 2(b) a decrease by the factor $1/10$). Biologically, the cell contractility was increased (experiment 2(a)) by injection of CA-RhoA.nos3'UTR RNA construct ($15$\,pg) \cite[Paterson, Self et al., 1990]{Ref:Paterson1990} and decreased (experiment 2(b)) by injecting DN-ROCK.nos3'UTR RNA construct ($150$\,pg) \cite[Blaser, Reichmann-Fried et al., 2006]{Ref:BlaserEtAl2006}. 
Both numerical and experimental results are displayed in \cref{fig:Experiments}, 2(a)-(b).
As the microscopy image in 2(a) shows, an increased contractility quickly leads to the formation of a huge bleb, consistent with the complete Ezrin depletion over a large part of the cell membrane observed in the simulation. In case of a decreased actin activity, both the experimental and numerical results in 2(b) show a high active Ezrin concentration over the major part of the membrane.  
\item
\emph{Changes in cell shape and nucleus position.}
The intracellular flow is strongly influenced by the cell shape as well as the position of the nucleus, which shields the back of the cell from the flow. We tested the effect of the cell shape on the equilibrium configuration of active Ezrin by decreasing the semi-minor and increasing the semi-major axis of the cell to $5/18$ and $9/5$, while keeping the cell volume constant. Additionally, we changed the position of the nucleus by shifting it closer to one side of the cell.
In order to observe the same behaviour in the microscopy images, the zebrafish embryos were injected with a moderate amount of Ypet.Ezrin.nos3'UTR RNA construct ($80$\,pg). The cells were followed by time lapse imaging, and the position of the nucleus was examined. 
The results are displayed in \cref{fig:Experiments}, 3(a)-(c).
While in the simulation results for the long and thin cell, the flow was partially blocked by the nucleus, the high density of active Ezrin in the microscopy image seems to be restricted to the back side of the cell. Reasons for the different behaviour could be the restriction to a two-dimensional region in the simulations or the different proportions of the nucleus and actual cell size. The simulation results for experiments 3(b) and 3(c) however fit the biological images. 
\item
\emph{Counteracting flows.}
Numerically it is straightforward to add a second layer of actin brushes that counteracts the flow of the first actin brushes.
The result is a flow-induced removal of active Ezrin in the cell front as well as the back so that active Ezrin only remains at the cell sides (\cref{fig:Experiments}, 4). 
Biologically, the embryos were injected with Ypet.Ezrin.nos3'UTR ($80$\,pg) and photoactivatable Rac1 construct ($150$\,pg, \cite[Wu, Frey et al., 2009]{Ref:WuEtAl2009}), which is a version of small GTPase Rac1 that becomes active when excited with the $456$\,nm light, promoting actin polymerization. Thereby, it is possible to generate actin brushes in a second location within a cell. In the mathematical simulations as well as in the corresponding microscopy images, active Ezrin accumulates at the cell sides, in case of the biological experiment it remains restricted to one side, presumably due to the slight asymmetry of the cell. 
\end{enumerate}


\begin{figure}
	\centering
	\includegraphics[width=1.1\textwidth]{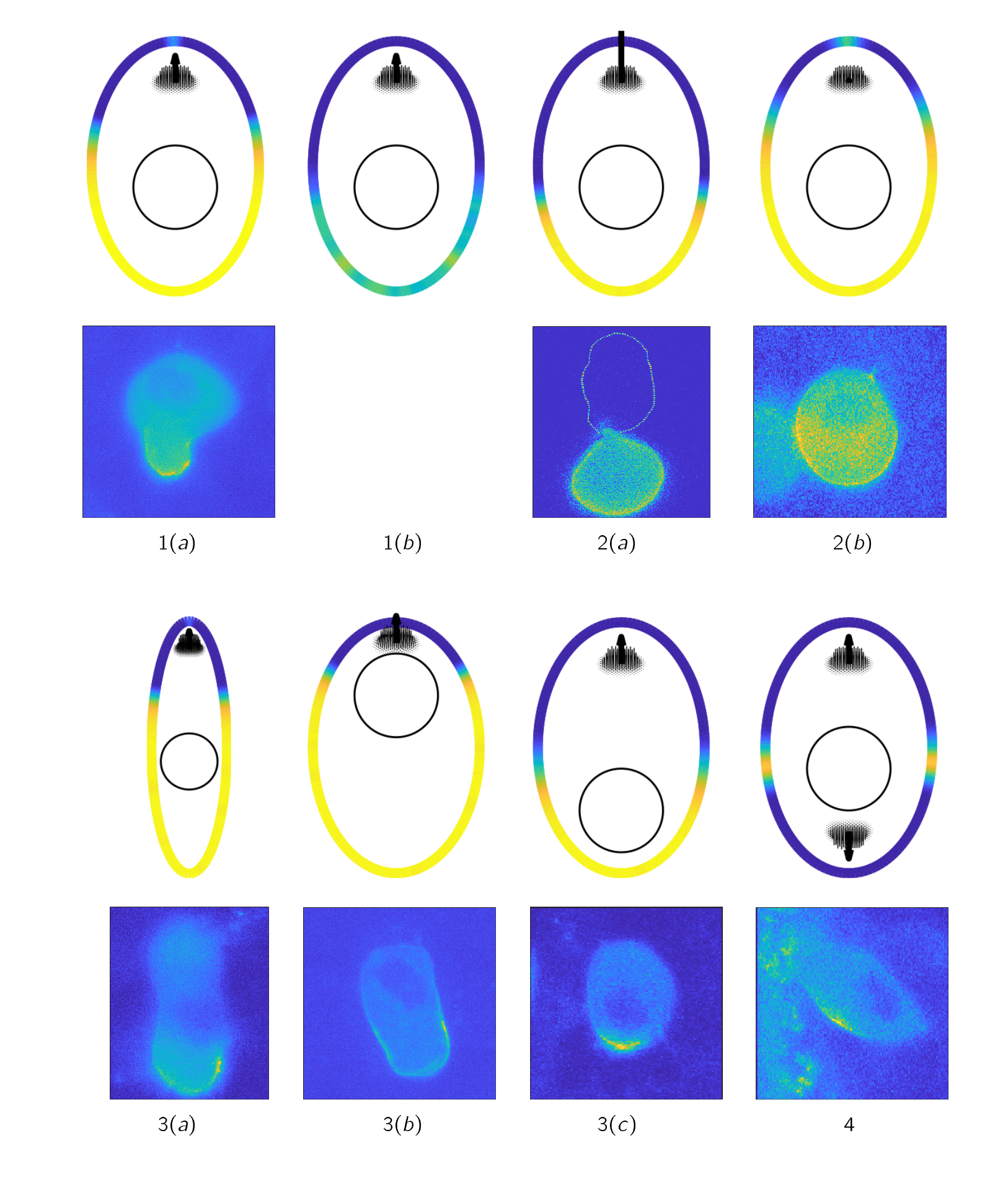}
	\caption{All experiments described in section \ref{sec:ExperimentalResults} next to each other. The top shows the numerical simulation result and the bottom a corresponding microscopy image.}
	\label{fig:Experiments}
\end{figure}

Overall we find good qualitative agreement between numerical and experimental results,
supporting the hypothesis that Ezrin destabilization at the cell membrane by an actin brush-induced intracellular flow may be the mechanism finally leading to bleb formation.

\section*{Acknowledgement}

The work was supported by the Alfried Krupp Prize for Young University Teachers awarded by the Alfried Krupp von Bohlen und Halbach-Stiftung, by the Deutsche Forschungsgemeinschaft (DFG, German Research Foundation) via Germany's Excellence Strategy through the Clusters of Excellence ``Cells-in-Motion'' (EXC 1003, particularly project PP-2017-07) and ``Mathematics M\"unster: Dynamics -- Geometry -- Structure'' (EXC 2044) at the University of M\"unster as well as by the European Research Council (ERC, CellMig) and the German Research Foundation (DFG RA 863/11-1). 

\bibliographystyle{plain}
\bibliography{mybibliography}

\end{document}